\documentclass[reqno]{amsart}

\usepackage[utf8]{inputenc}
\usepackage[T1]{fontenc}
\usepackage{lmodern}
\usepackage[top=3cm, bottom=3cm, left=3cm, right=3cm]{geometry}
\usepackage{amsmath, amssymb, amsthm}
\usepackage{hyperref}
\usepackage{xcolor}
\usepackage{cancel}
\usepackage[normalem]{ulem}
\usepackage{enumitem}
\usepackage{fancyhdr}
\usepackage{booktabs}

\newtheorem{theorem}{Theorem}[section]
\newtheorem{definition}[theorem]{Definition}
\newtheorem{corollary}[theorem]{Corollary}
\newtheorem{proposition}[theorem]{Proposition}
\newtheorem{lemma}[theorem]{Lemma}

\newtheorem{remark}[theorem]{Remark}

\newtheorem{example}[theorem]{Example}

\newcommand{\R}{\mathbb R}

\newcommand{\g}{\mathfrak{g}}

\newcommand{\brd}[2]{\left[#1,#2\right]_{\cdot}}
\newcommand{\brc}[2]{\left[#1,#2\right]_{\circ}}

\DeclareMathOperator{\Inn}{Inn}

\DeclareMathOperator{\ad}{ad}

\DeclareMathOperator{\Lie}{Lie}
\DeclareMathOperator{\Aut}{Aut}

\DeclareMathOperator{\Aff}{Aff}
\DeclareMathOperator{\GL}{GL}

\DeclareMathOperator{\Der}{Der}
\DeclareMathOperator{\ev}{ev}

\newcommand{\tcdot}{\mathchoice
  {\raise-0.20ex\hbox{$\displaystyle\tilde{\cdot}$}}
  {\raise-0.20ex\hbox{$\textstyle\tilde{\cdot}$}}
  {\raise-0.15ex\hbox{$\scriptstyle\tilde{\cdot}$}}
  {\raise-0.10ex\hbox{$\scriptscriptstyle\tilde{\cdot}$}}}
\newcommand{\tcirc}{\mathchoice
  {\raise-0.20ex\hbox{$\displaystyle\tilde{\circ}$}}
  {\raise-0.20ex\hbox{$\textstyle\tilde{\circ}$}}
  {\raise-0.15ex\hbox{$\scriptstyle\tilde{\circ}$}}
  {\raise-0.10ex\hbox{$\scriptscriptstyle\tilde{\circ}$}}}
  
  \newcommand{\id}{\mathrm{id}}

\begin{document}

\title[On simple compact Lie Skew Braces]{On simple compact Lie Skew Braces}

\author{Marco Damele}
\address{(Marco Damele) Dipartimento di Matematica \\
         Universit\`a di Cagliari (Italy)}
        \email{m.damele4@studenti.unica.it}

\author{Andrea Loi}
\address{(Andrea Loi) Dipartimento di Matematica \\
         Universit\`a di Cagliari (Italy)}
         \email{loi@unica.it}

\thanks{
The authors are supported by INdAM and  GNSAGA - Gruppo Nazionale per le Strutture Algebriche, Geometriche e le loro Applicazioni;   the research was also supported by ProBiki of Fondazione di Serdegna.}

\subjclass[2000]{16T25, 17B05, 17B30, 22E60, 22E46.} 
\keywords{Lie skew brace; skew left brace, post-Lie group; post-Lie algebra, integration of post Lie algebras, lambda-action, 
solvable and simple Lie skew braces.}

\begin{abstract}
We study simplicity for Lie skew braces from both the global and
infinitesimal points of view. After recalling the correspondence
between connected Lie skew braces, simply transitive affine actions,
and post-Lie algebras, we investigate ideals 
and rigidity phenomena.
Our main result concerns compact connected Lie skew braces. We prove
that a compact connected simple Lie skew brace is either the trivial
Lie skew brace on \(S^1\), or else both underlying Lie groups are
simple and the brace is trivial or almost trivial. In particular,
outside the exceptional case of \(S^1\), simplicity of a compact
connected Lie skew brace is equivalent to simplicity of either
underlying Lie group.
We also show that a connected compact solvable Lie skew brace is
necessarily trivial. Finally, we exhibit a noncompact example showing
that this rigidity fails in general: there exists a connected simply
connected simple Lie skew brace whose additive and multiplicative Lie
groups are both solvable.
\end{abstract}

\maketitle

\tableofcontents  
\section{Introduction}\label{sec:intro}

Skew braces, introduced by Guarnieri and Vendramin
\cite{Guarnieri2017}, form a natural algebraic framework for the study
of set-theoretic solutions of the Yang--Baxter equation. Over the last
few years they have also revealed deep connections with group theory,
Hopf--Galois structures, and solvability phenomena; see, for instance,
\cite{Guarnieri2017,Smoktunowicz2018,Bachiller2016Counterexample,Byott2024,BBEJP}.
In parallel, several Lie-theoretic structures related to Yang--Baxter
theory have proved to be equally fruitful, especially through their
links with affine actions on Lie groups and post-Lie algebra theory.

A smooth analogue of a skew brace is obtained by equipping a single
smooth manifold with two compatible Lie group structures. This leads
to the notion of a \emph{Lie skew brace}. As shown in
\cite{DameleLoi2026}, connected Lie skew braces admit a geometric
interpretation in terms of simply transitive affine actions on Lie
groups; see Theorem~\ref{thm:LSB-affine-correspondence}. On the
infinitesimal side, they determine post-Lie algebra structures in the
sense of \cite{BaiGuoShengTang2023,Burde2012affineaction}. Thus Lie
skew braces provide a natural global framework in which one may study
post-Lie structures together with their integrability.

The present paper is concerned with \emph{simplicity} for Lie skew
braces. More precisely, we study connected Lie skew braces having no
nontrivial proper connected closed ideals; see
Definition~\ref{def:simple-LSB}. 
In the finite theory, simplicity is known to exhibit striking and
sometimes unexpected behaviour: simple skew braces may have solvable,
or even abelian, additive group, and need not arise from simple groups
\cite{Bachiller2016Counterexample,Byott2024simple}. We return to this
comparison in Remark~\ref{rem:finite-simple-skew-braces-solvable}.
A guiding problem
for this work is to determine to what extent analogous phenomena
persist in the Lie setting, and in particular how the answer depends
on compactness.
Our results show that compactness is the decisive dividing line: the
compact connected case is highly rigid, whereas the noncompact case
still admits unexpectedly flexible simple examples; see
Example~\ref{ex:A1-1-model}.


Our first rigidity result deals with the case where the multiplicative
Lie group is simple. Using a theorem of Burde, Dekimpe, and Vercammen
on post-Lie algebra structures \cite{Burde2012affineaction}, we prove
in Theorem~\ref{thm:simple-group-rigidity} that if \((G,\circ)\) is a
simple Lie group, then every Lie skew brace \((G,\cdot,\circ)\) is
necessarily trivial or almost trivial. Equivalently, the two Lie group
laws either coincide or differ only by passage to the opposite group
law.

Our main result concerns the compact connected case; see
Theorem~\ref{thm:main-compact-simple}.

\medskip

\noindent\textbf{Main Theorem.}
\emph{Let \((G,\cdot,\circ)\) be a compact connected Lie skew brace,
and assume that it is simple. Then exactly one of the following holds:}
\begin{enumerate}[label=\textup{(\alph*)}]
    \item \emph{\((G,\cdot)=(G,\circ)=S^1\);}
    \item \emph{\((G,\cdot)\) and \((G,\circ)\) are simple Lie groups,
    and}
    \[
    \circ=\cdot
    \qquad\text{or}\qquad
    \circ=\cdot^{\mathrm{opp}}.
    \]
\end{enumerate}

\medskip

The proof of Theorem~\ref{thm:main-compact-simple} is genuinely
global and does not follow formally from the infinitesimal theory
alone. After passing to the simply connected case, we show that in the
compact semisimple setting the lambda-action is necessarily inner.
This allows us to embed the multiplicative group into a product group
as a subgroup complementary to the diagonal. Passing to Lie algebras,
we obtain a decomposition to which a proposition of Ozeki
\cite{Ozeki1977} applies. This ultimately forces the additive group to
be simple. At that point, the rigidity result proved earlier implies
that the brace is trivial or almost trivial.

As an immediate consequence, outside the exceptional case of \(S^1\),
simplicity of a compact connected Lie skew brace is equivalent to
simplicity of either  of its underlying Lie groups; see
Corollary~\ref{cor:compact-simple-equivalence}. This shows that the
compact connected Lie setting is substantially more rigid than the
finite one. We also discuss solvability in this framework and prove
that a connected compact solvable Lie skew brace is necessarily
trivial.

We conclude by showing that this rigidity is genuinely specific to the
compact case. Indeed, in Example~\ref{ex:A1-1-model} we construct an
explicit connected simply connected simple Lie skew brace whose
additive and multiplicative Lie groups are both solvable. Thus, in
sharp contrast with the compact connected setting, simplicity of a Lie
skew brace does not force simplicity of either underlying Lie group in
the noncompact case.

The paper is organized as follows. In Section~\ref{secLSB} we develop
the general theory of Lie skew braces, affine actions, post-Lie
algebras, ideals, and simplicity, and we prove the rigidity result for
Lie skew braces with simple multiplicative group. In
Section~\ref{sec:compact-simple} we establish the compact connected
rigidity theorem and its consequences, and we conclude with
Example~\ref{ex:A1-1-model}, which shows  that such rigidity fails outside
the compact setting.

\section{Preliminaries}\label{secLSB}

\subsection{Lie skew braces}

A \emph{Lie skew brace} (LSB) is a triple $(G,\cdot,\circ)$, where
$(G,\cdot)$ and $(G,\circ)$ are real Lie groups on the same smooth manifold,
such that
\begin{equation}\label{eqLSB}
a\circ (b\cdot c)=(a\circ b)\cdot a^{-1}\cdot (a\circ c)
\end{equation}
for all $a,b,c\in G$, where $a^{-1}$ denotes the inverse of $a$ in
$(G,\cdot)$. Setting $b=c=e_{\cdot}$ in \eqref{eqLSB}, one sees immediately
that the two group laws have the same identity element, which we denote by $e$.

One can therefore define the category $\mathbf{LSB}$, whose objects are Lie
skew braces and whose morphisms
\[
\varphi:(G_1,\cdot_1,\circ_1)\longrightarrow (G_2,\cdot_2,\circ_2)
\]
are smooth maps such that
\[
\varphi(a\cdot_1 b)=\varphi(a)\cdot_2 \varphi(b),
\qquad
\varphi(a\circ_1 b)=\varphi(a)\circ_2 \varphi(b)
\]
for all $a,b\in G_1$. Thus $\mathbf{LSB}$ is the Lie-theoretic analogue of the
category of skew braces introduced in \cite{Guarnieri2017}; see also
\cite{Smoktunowicz2018}. In particular, when the underlying manifold is
discrete, Lie skew braces reduce to ordinary skew braces.

\medskip

Let $(G,\cdot)$ be a connected real Lie group. The automorphism group
$\Aut(G,\cdot)$, endowed with the compact-open topology induced from $C^0(G,G)$,
is a finite-dimensional real Lie group; see \cite{Hochschild1952aut,Dani1992}.

\begin{definition}
Let $(G,\cdot)$ be a connected Lie group. The \emph{affine group} associated
with $(G,\cdot)$ is the semidirect product
\[
\Aff(G,\cdot):=(G,\cdot)\rtimes \Aut(G,\cdot),
\]
where $\Aut(G,\cdot)$ acts on $G$ by evaluation.
\end{definition}

Now fix a Lie skew brace $(G,\cdot,\circ)$. Its \emph{lambda-action} is the map
\begin{equation}\label{lambda}
\lambda:(G,\circ)\longrightarrow \Aut(G,\cdot),\qquad a\longmapsto \lambda_a,
\end{equation}
where
\begin{equation}\label{lambdaspec}
\lambda_a(b):=a^{-1}\cdot(a\circ b),\qquad b\in G.
\end{equation}
It is well known that $\lambda$ is a smooth group homomorphism; see
\cite[Cor.~1.10]{Guarnieri2017} and \cite[Lemma~2.4]{DameleLoi2026}. Moreover,
the two group laws determine each other via
\[
a\circ b=a\cdot \lambda_a(b),
\qquad
a\cdot b=a\circ \lambda_{\overline a}(b),
\]
for all $a,b\in G$, where $\overline a$ denotes the inverse of $a$ in
$(G,\circ)$.

\subsection{Affine actions, post-Lie algebras, and integrability}\label{subsec:affine-postlie}

We now recall two complementary points of view that will be used repeatedly
throughout the paper. The first is the interpretation of Lie skew braces in
terms of simply transitive affine actions. The second is the infinitesimal
description in terms of post-Lie algebras and the corresponding integrability
problem.

\medskip
\noindent
\textbf{Affine actions.}

\begin{definition}\label{def:affine-action}
Let $(H,\circ)$ and $(K,\cdot)$ be connected real Lie groups. An
\emph{affine action} of $(H,\circ)$ on $(K,\cdot)$ is a Lie group homomorphism
\[
\rho:(H,\circ)\longrightarrow \Aff(K,\cdot)
=(K,\cdot)\rtimes \Aut(K,\cdot).
\]
If
\[
\rho(h)=(x,\varphi),
\]
the induced action of $h$ on $K$ is given by
\[
h\cdot k:=x\cdot \varphi(k),
\qquad k\in K.
\]
The action is said to be \emph{simply transitive} if for every $k\in K$ there
exists a unique $h\in H$ such that
\[
h\cdot e_K=k,
\]
or equivalently, for every $k_1,k_2\in K$ there exists a unique $h\in H$ such
that
\[
h\cdot k_1=k_2.
\]
\end{definition}

\begin{remark}\rm
If $(G,\cdot,\circ)$ is a Lie skew brace, then the map
\[
\iota:(G,\circ)\longrightarrow \Aff(G,\cdot),
\qquad
a\longmapsto (a,\lambda_a),
\]
is a Lie group homomorphism. The induced action on $(G,\cdot)$ is
\[
\iota(a)\cdot x
=
a\cdot \lambda_a(x)
=
a\circ x,
\qquad a,x\in G.
\]
Since left translations in the Lie group $(G,\circ)$ are simply transitive, it
follows immediately that this affine action is simply transitive.
\end{remark}

Conversely, every simply transitive affine action gives rise to a Lie skew
brace structure on the underlying manifold of the target Lie group. More
precisely, the following correspondence between connected Lie skew braces and
simply transitive affine actions was proved in
\cite[Thm.~2.13]{DameleLoi2026}.

\begin{theorem}\label{thm:LSB-affine-correspondence}
Let $(K,\cdot)$ be a connected real Lie group. The following are equivalent:
\begin{enumerate}[label=\textup{(\roman*)}]
    \item there exists a Lie group structure $\circ$ on the manifold $K$ such
    that $(K,\cdot,\circ)$ is a Lie skew brace;

    \item there exist a connected Lie group $(H,\circ)$ and a simply transitive
    affine action
    \[
    \rho:(H,\circ)\longrightarrow \Aff(K,\cdot).
    \]
\end{enumerate}
Moreover, when these conditions hold, the Lie group $(H,\circ)$ is isomorphic
to the multiplicative group of the corresponding Lie skew brace.
\end{theorem}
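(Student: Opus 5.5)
The implication (i)$\Rightarrow$(ii) is already settled by the preceding Remark. Take $(H,\circ):=(K,\circ)$ and $\rho:=\iota$, $a\mapsto(a,\lambda_a)$. Smoothness and multiplicativity of $\lambda$ make $\iota$ a Lie group homomorphism into $\Aff(K,\cdot)$. The induced action is $a\cdot x=a\circ x$, which is simply transitive. Connectedness of $H$ holds because $(K,\circ)$ lives on the connected manifold $K$. The multiplicative group is then $H$ itself, tautologically.

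For (ii)$\Rightarrow$(i), I would transport the group structure of $H$ to $K$ through the orbit map. Write $\rho(h)=(x_h,\varphi_h)$ and let $\pi:H\to K$, $\pi(h):=h\cdot e_K=x_h$. This is the first component of $\rho$, hence smooth, and by simple transitivity it is bijective. I will need $\pi$ to be a diffeomorphism. Since $H$ acts smoothly and transitively on the connected manifold $K$ with trivial stabilizers, the standard homogeneous-space theorem gives $H/\{e\}\cong K$ as manifolds. Alternatively: $\pi$ is equivariant, so it has constant rank, and a bijective constant-rank smooth map between manifolds is a diffeomorphism. This uses second countability of $H$, which I will have to check, or else assume $H$ has countably many components; since $H$ is connected, this is fine. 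With $\pi$ in hand, define $a\circ b:=\pi(\pi^{-1}(a)\pi^{-1}(b))$. This is a Lie group law on the manifold $K$, and $\pi$ becomes an isomorphism $(H,\circ)\cong(K,\circ)$, which also yields the ``moreover'' clause.

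The brace identity comes next. Because $\pi$ is equivariant, $a\circ b=\pi(h_a h_b)=h_a\cdot\pi(h_b)=a\cdot\varphi_{h_a}(b)$, where $h_a=\pi^{-1}(a)$. Set $\lambda_a:=\varphi_{h_a}\in\Aut(K,\cdot)$. Then
\[
a\circ(b\cdot c)=a\cdot\lambda_a(b)\cdot\lambda_a(c)=(a\cdot\lambda_a(b))\cdot a^{-1}\cdot(a\cdot\lambda_a(c))=(a\circ b)\cdot a^{-1}\cdot(a\circ c),
\]
which is \eqref{eqLSB}. The identities agree, since $\pi(e_H)=e_K$.

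The only nonformal step is showing that $\pi$ is a diffeomorphism rather than merely a smooth bijection, that is, that the transported structure is smooth on the original manifold $K$. I would handle this with the constant-rank argument. Equivariance gives $\pi\circ L_h=(h\cdot)\circ\pi$, and both $L_h$ and $(h\cdot)$ are diffeomorphisms, so the rank of $d\pi$ is constant. Sard's theorem then rules out rank below $\dim K$. Injectivity forces $\dim H\le\dim K$ by the constant rank theorem, so $\dim H=\dim K$ and $\pi$ is a local diffeomorphism; being bijective, it is a diffeomorphism.
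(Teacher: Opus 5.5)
Your proposal is correct. The direction (i)$\Rightarrow$(ii) is exactly the argument of the Remark immediately preceding the theorem. For (ii)$\Rightarrow$(i), which the paper does not prove but imports from \cite[Thm.~2.13]{DameleLoi2026}, your route is the standard one and is complete: you transport the group law along the orbit map $h\mapsto h\cdot e_K$, set $\lambda_a:=\varphi_{\pi^{-1}(a)}$ to obtain \eqref{eqLSB}, and prove that $\pi$ is a diffeomorphism by the equivariance, constant-rank and Sard argument. That argument has the added merit of using only smoothness of $\pi$ and the fact that each $k\mapsto h\cdot k$ is a diffeomorphism, so it never needs joint smoothness of the action.
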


\begin{remark}\rm
Theorem~\ref{thm:LSB-affine-correspondence} shows that studying Lie skew brace
structures on a fixed connected Lie group $(K,\cdot)$ is equivalent to studying
connected Lie groups acting simply transitively on $(K,\cdot)$ by affine
transformations. In particular, many algebraic questions on Lie skew braces may
be reformulated as geometric questions on affine actions.
\end{remark}

\medskip
\noindent
\textbf{Post-Lie algebras and integrability.}
We briefly recall the Lie-theoretic passage from Lie groups to Lie algebras;
see, for instance, \cite{Hall2015,Warner}. Given a Lie group $(G,\cdot)$, its
Lie algebra is
\[
\Lie(G,\cdot)=(\g,[\, ,\,]_{\cdot}),
\]
where $\g=T_eG$ and the bracket is induced by left-invariant vector fields.
Conversely, every finite-dimensional real Lie algebra integrates to a connected
simply connected Lie group, unique up to isomorphism, and every other connected
Lie group with the same Lie algebra is obtained as a quotient by a discrete
central subgroup.

\begin{definition}\label{postLieAlgebras}
A \emph{post-Lie algebra} (PLA) is a quadruple
\[
(\g,[\, ,\,]_{\cdot},[\, ,\,]_{\circ},\triangleright),
\]
where $(\g,[\, ,\,]_{\cdot})$ and $(\g,[\, ,\,]_{\circ})$ are real Lie algebras
on the same finite-dimensional real vector space $\g$, and
\[
\triangleright:\g\times \g\to \g,
\qquad
(\xi,\eta)\mapsto \xi\triangleright \eta
\]
is a bilinear map such that, for all $\xi,\eta,\zeta\in\g$,
\begin{enumerate}[label=\textup{(\roman*)}]
\item
\[
[\xi,\eta]_{\circ}-[\xi,\eta]_{\cdot}
=
\xi\triangleright \eta-\eta\triangleright \xi;
\]
\item
\[
\xi\triangleright [\eta,\zeta]_{\cdot}
=
[\xi\triangleright \eta,\zeta]_{\cdot}
+
[\eta,\xi\triangleright \zeta]_{\cdot};
\]
\item
\[
[\xi,\eta]_{\circ}\triangleright \zeta
=
\xi\triangleright (\eta\triangleright \zeta)
-
\eta\triangleright (\xi\triangleright \zeta).
\]
\end{enumerate}
\end{definition}

\begin{remark}\rm
By condition~\textup{(i)} in Definition~\ref{postLieAlgebras}, the bracket
$[\, ,\,]_{\circ}$ is uniquely determined by $[\, ,\,]_{\cdot}$ and
$\triangleright$. Hence one may equivalently define a post-Lie algebra as a
triple
\[
(\g,[\, ,\,]_{\cdot},\triangleright),
\]
where $(\g,[\, ,\,]_{\cdot})$ is a real Lie algebra and $\triangleright$ is a
bilinear map satisfying
\begin{enumerate}[label=\textup{(\roman*)}]
\item
\[
\xi\triangleright [\eta,\zeta]_{\cdot}
=
[\xi\triangleright \eta,\zeta]_{\cdot}
+
[\eta,\xi\triangleright \zeta]_{\cdot};
\]
\item
\[
\bigl([\xi,\eta]_{\cdot}
+\xi\triangleright \eta
-\eta\triangleright \xi\bigr)\triangleright \zeta
=
\xi\triangleright (\eta\triangleright \zeta)
-
\eta\triangleright (\xi\triangleright \zeta).
\]
\end{enumerate}
One then defines the \emph{sub-adjacent Lie algebra}
$(\g,[\, ,\,]_{\circ})$ by
\[
[\xi,\eta]_{\circ}
:=
[\xi,\eta]_{\cdot}
+\xi\triangleright \eta
-\eta\triangleright \xi.
\]
See \cite{BaiGuoShengTang2023,Burde2012affineaction}.
\end{remark}

The category $\mathbf{PLA}$ has as objects post-Lie algebras
\[
(\g,[\, ,\,]_{\cdot},[\, ,\,]_{\circ},\triangleright),
\]
and as morphisms linear maps
\[
\phi:\g_1\to \g_2
\]
such that
\[
\phi([\xi,\eta]_{\cdot_1})
=
[\phi(\xi),\phi(\eta)]_{\cdot_2},
\qquad
\phi(\xi\triangleright_1 \eta)
=
\phi(\xi)\triangleright_2 \phi(\eta)
\]
for all $\xi,\eta\in\g_1$.

\medskip
Given a Lie skew brace $(G,\cdot,\circ)$, one has two Lie algebras
\[
\Lie(G,\cdot)=(\g,[\, ,\,]_{\cdot}),
\qquad
\Lie(G,\circ)=(\g,[\, ,\,]_{\circ}),
\]
on the same vector space $\g=T_eG$. The differential of the lambda-map
\eqref{lambda} at the identity,
\[
\lambda_{*e}:(\g,[\, ,\,]_{\circ})\longrightarrow
\Der(\g,[\, ,\,]_{\cdot}),
\]
defines a bilinear product on $\g$ by
\begin{equation}\label{trianglelambda}
\xi\triangleright \eta:=\lambda_{*e}(\xi)(\eta),
\qquad \xi,\eta\in\g.
\end{equation}
As shown in \cite{BaiGuoShengTang2023}, the quadruple
\[
(\g,[\, ,\,]_{\cdot},[\, ,\,]_{\circ},\triangleright)
\]
is a post-Lie algebra.

Thus one obtains a functor
\[
D:\mathbf{LSB}\to \mathbf{PLA},
\]
given on objects and morphisms by
\[
D(G,\cdot,\circ)
=
(\g,[\, ,\,]_{\cdot},[\, ,\,]_{\circ},\triangleright),
\qquad
D(\varphi)=\varphi_{*e}.
\]

\begin{definition}
A post-Lie algebra
\[
(\g,[\, ,\,]_{\cdot},[\, ,\,]_{\circ},\triangleright)
\]
is called \emph{integrable} if there exists a connected Lie skew brace
$(G,\cdot,\circ)$ such that
\[
D(G,\cdot,\circ)=
(\g,[\, ,\,]_{\cdot},[\, ,\,]_{\circ},\triangleright).
\]
\end{definition}

The following result may be viewed as a post-Lie/Lie-skew-brace analogue of
Lie's integration principle: under the simply connectedness assumption,
morphisms at the infinitesimal level integrate uniquely to global morphisms.
\begin{proposition}\label{prop:integration-morphisms-PLA}
Let
\[
(\mathfrak g_1,[\, ,\,]_{\cdot_1},[\, ,\,]_{\circ_1},\triangleright_1), \qquad
(\mathfrak g_2,[\, ,\,]_{\cdot_2},[\, ,\,]_{\circ_2},\triangleright_2)
\]
be integrable post--Lie algebras, and let
\[
(G_1,\cdot_1,\circ_1), \qquad (G_2,\cdot_2,\circ_2)
\]
be simply connected Lie skew braces integrating them. Let
\[
\varphi:\mathfrak g_1 \longrightarrow \mathfrak g_2
\]
be a morphism of post--Lie algebras. Then there exists a unique morphism of Lie skew braces
\[
F:(G_1,\cdot_1,\circ_1)\longrightarrow (G_2,\cdot_2,\circ_2)
\]
such that
\[
dF_e=\varphi.
\]
\end{proposition}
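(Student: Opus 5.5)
Since $\varphi$ is a Lie algebra morphism $(\mathfrak g_1,[\,,\,]_{\cdot_1})\to(\mathfrak g_2,[\,,\,]_{\cdot_2})$ and $G_1$ is simply connected, Lie's integration theorem gives a unique Lie group homomorphism $F:(G_1,\cdot_1)\to(G_2,\cdot_2)$ with $dF_e=\varphi$. The bracket $[\,,\,]_{\circ}$ is determined by $[\,,\,]_{\cdot}$ and $\triangleright$ via condition (i) of Definition~\ref{postLieAlgebras}. Hence $\varphi$ is also a Lie algebra morphism $(\mathfrak g_1,[\,,\,]_{\circ_1})\to(\mathfrak g_2,[\,,\,]_{\circ_2})$, and we likewise obtain a unique homomorphism $F':(G_1,\circ_1)\to(G_2,\circ_2)$ with $dF'_e=\varphi$. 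Uniqueness of $F$ is clear, since a Lie skew brace morphism is in particular a $\cdot$-homomorphism, and homomorphisms out of a connected group are determined by their differential at $e$. The whole content is therefore to show $F=F'$.

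To prove $F=F'$ I would use the lambda-actions. First I show that $F$ intertwines them, that is,
\[
F\circ\lambda^{1}_{a}=\lambda^{2}_{F'(a)}\circ F \qquad\text{for all } a\in G_1 .
\]
Both sides are homomorphisms $(G_1,\cdot_1)\to(G_2,\cdot_2)$. Their differentials at $e$ are $\varphi\circ(\lambda^1_a)_{*e}$ and $(\lambda^2_{F'(a)})_{*e}\circ\varphi$, so by connectedness it suffices to compare these. Consider the two maps $(G_1,\circ_1)\to \GL(\mathfrak g_1,\mathfrak g_2)$ given by $a\mapsto \varphi\circ\mathrm{Ad}$-type expressions. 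More cleanly, both $a\mapsto (\lambda^2_{F'(a)})_{*e}$ and $a\mapsto(\lambda^1_a)_{*e}$ are representations of $(G_1,\circ_1)$, on $\mathfrak g_2$ and $\mathfrak g_1$ respectively, and $\varphi$ is an intertwiner at the infinitesimal level. Indeed, differentiating at $e$ gives $\xi\mapsto \varphi(\xi)\triangleright_2(\cdot)$ and $\xi\mapsto \xi\triangleright_1(\cdot)$, and
\[
\varphi(\xi\triangleright_1\eta)=\varphi(\xi)\triangleright_2\varphi(\eta).
\]
Since $(G_1,\circ_1)$ is connected, it is generated by $\exp_{\circ_1}$ of a neighbourhood of $0$. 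On one-parameter subgroups both sides solve the same linear ODE (both are of the form $\exp(t\,\cdot)$), so the intertwining $\varphi\circ(\lambda^1_a)_{*e}=(\lambda^2_{F'(a)})_{*e}\circ\varphi$ holds for all $a$. This gives the displayed identity.

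Next I would compare $F$ and $F'$ through the affine maps. Define
\[
\Psi(a):=F(a)^{-1}\cdot_2 F'(a).
\]
I want to show $\Psi\equiv e$. Since $F'$ is a $\circ$-homomorphism,
\[
F'(a\circ_1 b)=F'(a)\circ_2F'(b)=F'(a)\cdot_2\lambda^2_{F'(a)}(F'(b)).
\]
On the other side,
\[
F(a\circ_1 b)=F(a\cdot_1\lambda^1_a(b))=F(a)\cdot_2\lambda^2_{F'(a)}(F(b)),
\]
by the intertwining identity. Thus $\Psi(a\circ_1 b)=\lambda^2_{F'(a)}(F(b))^{-1}\cdots$, which is a cocycle-type relation. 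The cleaner route is to consider
\[
\Phi:=\{(F(a),F'(a))\}.
\]
I would rather argue as follows. The map $\rho_2\circ F':(G_1,\circ_1)\to\Aff(G_2,\cdot_2)$, $a\mapsto(F'(a),\lambda^2_{F'(a)})$, is a homomorphism. Using the intertwining, the map $a\mapsto (F(a),\lambda^2_{F'(a)})$ also satisfies the homomorphism law on the domain $(G_1,\circ_1)$, by the computation above. Both are homomorphisms from a connected group with the same differential at $e$, namely $(\varphi,\ \xi\mapsto\varphi(\xi)\triangleright_2\cdot)$, so they coincide, and hence $F=F'$.

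The main obstacle is this last step, checking that $a\mapsto(F(a),\lambda^2_{F'(a)})$ is a homomorphism into $\Aff(G_2,\cdot_2)$. Unwinding, this reduces to
\[
F(a\circ_1b)=F(a)\cdot_2\lambda^2_{F'(a)}(F(b)),
\]
which follows from $a\circ_1 b=a\cdot_1\lambda^1_a(b)$, the $\cdot$-multiplicativity of $F$, and the intertwining identity. Finally, $F=F'$ is smooth and respects both laws, so it is the required morphism of Lie skew braces.
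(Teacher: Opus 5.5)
Your proposal is correct and follows essentially the paper's argument. Like the paper, you integrate $\varphi$ for both group laws, prove the intertwining $F\circ\lambda^1_a=\lambda^2_{F'(a)}\circ F$ first on $\mathfrak g_1$ and then on $G_1$, and conclude $F=F'$ by comparing two homomorphisms $(G_1,\circ_1)\to\Aff(G_2,\cdot_2)$ with equal differentials. The differences are minor: you use $\lambda^2\circ F'$ directly where the paper integrates $\xi\mapsto\varphi(\xi)\triangleright_2(\cdot)$ to a separate $\widetilde\lambda$ (which a posteriori equals $\lambda^2\circ F'$), you check the infinitesimal intertwining on one-parameter subgroups rather than via the orbit map $g\mapsto\Psi(g)(\varphi)$, and the abandoned detours through $F(a)^{-1}\cdot_2F'(a)$ and $\{(F(a),F'(a))\}$ can simply be deleted.
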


\begin{proof}
Since $\varphi$ preserves $[\,,\,]_{\cdot_1}$ and $[\,,\,]_{\circ_1}$, it integrates uniquely to Lie group
homomorphisms
\[
F:(G_1,\cdot_1)\to (G_2,\cdot_2),
\qquad
E:(G_1,\circ_1)\to (G_2,\circ_2),
\qquad
dF_e=dE_e=\varphi.
\]
We show that $F=E$. Let
\[
\lambda^{G_1}:(G_1,\circ_1)\to \mathrm{Aut}(G_1,\cdot_1),
\qquad
\lambda^{G_2}:(G_2,\circ_2)\to \mathrm{Aut}(G_2,\cdot_2)
\]
be the lambda-actions, and define
\[
\eta:\mathfrak g_1\to \mathrm{Der}(\mathfrak g_2,[\,,\,]_{\cdot_2}),
\qquad
\eta(\xi)(b):=\varphi(\xi)\triangleright_2 b .
\]
It is immediate that $\eta$ is a Lie algebra homomorphism, hence, since $(G_1,\circ_1)$ is simply
connected, it integrates uniquely to a Lie group homomorphism
\[
\widetilde{\lambda}:(G_1,\circ_1)\to \mathrm{Aut}(G_2,\cdot_2)
\qquad\text{with}\qquad
d\widetilde{\lambda}_e=\eta .
\]

Consider
\[
\Theta:(G_1,\circ_1)\to \mathrm{Aff}(G_2,\cdot_2),
\qquad
\Theta(g):=(F(g),\widetilde{\lambda}_g).
\]
Assume for the moment that $\Theta$ is a Lie group homomorphism. Let
\[
\iota_{G_2}:(G_2,\circ_2)\to \mathrm{Aff}(G_2,\cdot_2),
\qquad
\iota_{G_2}(u):=(u,\lambda^{G_2}_u).
\]
Then $\iota_{G_2}\circ E$ is also a Lie group homomorphism, and
\[
d\Theta_e=d(\iota_{G_2}\circ E)_e,
\]
because the first components are both $\varphi$, while on the second component
\[
d\widetilde{\lambda}_e=\eta=d\lambda^{G_2}_e\circ \varphi=d(\lambda^{G_2}\circ E)_e.
\]
By uniqueness of integration from the simply connected group $(G_1,\circ_1)$,
\[
\Theta=\iota_{G_2}\circ E.
\]
Comparing first components gives $F=E$, so $F$ is a homomorphism for both laws. It remains to prove that $\Theta$ is a homomorphism. For this it suffices to show that
\[
F\circ \lambda^{G_1}_g=\widetilde{\lambda}_g\circ F
\qquad\forall g\in G_1,
\]
for then
\[
\Theta(g)\Theta(h)
=
\bigl(F(g)\cdot_2 \widetilde{\lambda}_g(F(h)),\,\widetilde{\lambda}_g\widetilde{\lambda}_h\bigr)
=
\bigl(F(g\cdot_1 \lambda^{G_1}_g(h)),\,\widetilde{\lambda}_{g\circ_1 h}\bigr)
=
\Theta(g\circ_1 h).
\]

To prove the intertwining relation, it is enough to compare differentials at the identity. Set
\[
\rho_{G_1}:=D_{G_1}\circ \lambda^{G_1}:(G_1,\circ_1)\to \mathrm{Aut}(\mathfrak g_1,[\,,\,]_{\cdot_1}),
\qquad
\widetilde{\rho}:=D_{G_2}\circ \widetilde{\lambda}:(G_1,\circ_1)\to \mathrm{Aut}(\mathfrak g_2,[\,,\,]_{\cdot_2}),
\]
where $D_{G_1}(\alpha)=d\alpha_e$ and $D_{G_2}(\beta)=d\beta_e$. Then
\[
d(\rho_{G_1})_e(\xi)(\zeta)=\xi\triangleright_1 \zeta,
\qquad
d(\widetilde{\rho})_e(\xi)(b)=\varphi(\xi)\triangleright_2 b.
\]
Thus it is enough to prove that
\[
\varphi\circ \rho_{G_1}(g)=\widetilde{\rho}(g)\circ \varphi
\qquad\forall g\in G_1.
\]

Define
\[
\Psi(g)(T):=\widetilde{\rho}(g)\circ T\circ \rho_{G_1}(g)^{-1},
\qquad
T\in \mathrm{Hom}(\mathfrak g_1,\mathfrak g_2).
\]
Then $\Psi$ is a Lie group homomorphism. Hence the map
\[
f(g):=\Psi(g)(\varphi)
\]
is constant as soon as its differential at the identity vanishes. For $\xi,\zeta\in \mathfrak g_1$,
\[
\bigl(d\Psi_e(\xi)(\varphi)\bigr)(\zeta)
=
d(\widetilde{\rho})_e(\xi)(\varphi(\zeta))
-
\varphi\bigl(d(\rho_{G_1})_e(\xi)(\zeta)\bigr)
=
\varphi(\xi)\triangleright_2 \varphi(\zeta)-\varphi(\xi\triangleright_1 \zeta)=0,
\]
since $\varphi$ preserves the post-Lie product. Therefore $f$ is constant, and evaluating at $e$
gives
\[
\varphi\circ \rho_{G_1}(g)=\widetilde{\rho}(g)\circ \varphi
\qquad\forall g\in G_1.
\]

It follows that
\[
d(F\circ \lambda^{G_1}_g)_e=d(\widetilde{\lambda}_g\circ F)_e.
\]
Since $(G_1,\cdot_1)$ is simply connected, uniqueness of integration yields
\[
F\circ \lambda^{G_1}_g=\widetilde{\lambda}_g\circ F
\qquad\forall g\in G_1.
\]
Hence $\Theta$ is a Lie group homomorphism, and the conclusion follows.
Uniqueness is immediate from uniqueness of integration.
\end{proof}
\begin{corollary}
Let
$(\mathfrak g_1,[\, ,\,]_{\cdot_1},[\, ,\,]_{\circ_1},\triangleright_1)$
and
$(\mathfrak g_2,[\, ,\,]_{\cdot_2},[\, ,\,]_{\circ_2},\triangleright_2)$
be integrable post--Lie algebras. If they are isomorphic as post--Lie algebras, then the associated simply connected Lie skew braces
$(G_1,\cdot_1,\circ_1)$ and  $(G_2,\cdot_2,\circ_2)$
are isomorphic as Lie skew braces.
\end{corollary}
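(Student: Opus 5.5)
The plan is to apply Proposition~\ref{prop:integration-morphisms-PLA} twice, once to an isomorphism and once to its inverse, and then use the uniqueness clause to see that the two resulting morphisms are mutually inverse.

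Let $\varphi:\mathfrak g_1\to\mathfrak g_2$ be an isomorphism of post-Lie algebras. Its inverse $\varphi^{-1}$ is again a morphism of post-Lie algebras, since inverting both sides of $\varphi([\xi,\eta]_{\cdot_1})=[\varphi\xi,\varphi\eta]_{\cdot_2}$ and $\varphi(\xi\triangleright_1\eta)=\varphi\xi\triangleright_2\varphi\eta$ gives the corresponding identities for $\varphi^{-1}$. Preservation of $[\,,\,]_\circ$ then follows automatically from condition (i) of Definition~\ref{postLieAlgebras}.

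Take the simply connected Lie skew braces $(G_1,\cdot_1,\circ_1)$ and $(G_2,\cdot_2,\circ_2)$ integrating the two post-Lie algebras. Proposition~\ref{prop:integration-morphisms-PLA} produces morphisms of Lie skew braces
\[
F:G_1\to G_2,\qquad H:G_2\to G_1,
\qquad dF_e=\varphi,\quad dH_e=\varphi^{-1}.
\]

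Next I check that $H\circ F$ and $F\circ H$ are identities.

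\textbf{Closure under composition.} A composition of morphisms in $\mathbf{LSB}$ is again a morphism in $\mathbf{LSB}$. It is smooth and preserves both group laws, so $H\circ F$ is an endomorphism of $(G_1,\cdot_1,\circ_1)$.

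\textbf{Differential at the identity.} By the chain rule, $d(H\circ F)_e=\varphi^{-1}\varphi=\id_{\mathfrak g_1}$.

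\textbf{Applying uniqueness.} The identity map of $G_1$ is also a Lie skew brace morphism with differential $\id_{\mathfrak g_1}$. The uniqueness clause of Proposition~\ref{prop:integration-morphisms-PLA}, applied with $\mathfrak g_1=\mathfrak g_2$ and the morphism $\id$, gives $H\circ F=\id_{G_1}$. The same argument on $G_2$ gives $F\circ H=\id_{G_2}$.

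Therefore $F$ is a bijective Lie skew brace morphism with smooth inverse $H$, hence an isomorphism of Lie skew braces.

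There is no genuine obstacle. The only point needing care is that uniqueness in the Proposition is stated for morphisms between the fixed simply connected integrations. This is exactly the setting here, since both $H\circ F$ and $\id$ map $G_1$ to itself.
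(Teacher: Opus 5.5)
Your proposal is correct and follows the paper's proof essentially verbatim: you integrate $\varphi$ and $\varphi^{-1}$ via Proposition~\ref{prop:integration-morphisms-PLA}, then use uniqueness of integration to show that both composites are the identity maps. Your additional checks, that $\varphi^{-1}$ is again a post-Lie morphism and that composites of Lie skew brace morphisms are morphisms, are details the paper leaves implicit.
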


\begin{proof}
Let
$\varphi:\mathfrak g_1 \longrightarrow \mathfrak g_2$
be an isomorphism of post--Lie algebras. By the proposition, $\varphi$ integrates to a unique morphism of Lie skew braces
\[
F:(G_1,\cdot_1,\circ_1)\longrightarrow (G_2,\cdot_2,\circ_2)
\]
such that $dF_e=\varphi$. Applying the proposition to $\varphi^{-1}$, we obtain a morphism
\[
\widetilde F:(G_2,\cdot_2,\circ_2)\longrightarrow (G_1,\cdot_1,\circ_1)
\]
such that $d\widetilde F_e=\varphi^{-1}$. Hence
\[
d(\widetilde F\circ F)_e=\mathrm{id}_{\mathfrak g_1},
\qquad
d(F\circ \widetilde F)_e=\mathrm{id}_{\mathfrak g_2}.
\]
By uniqueness of integration, it follows that
\[
\widetilde F\circ F=\mathrm{id}_{G_1},
\qquad
F\circ \widetilde F=\mathrm{id}_{G_2}.
\]
Therefore $F$ is an isomorphism of Lie skew braces.
\end{proof}

\begin{corollary}
Let
\[
\mathrm{Int}:\mathbf{PLA}_{\mathrm{int}} \longrightarrow \mathbf{LSB}_{\mathrm{sc}}
\]
be the functor that assigns to an integrable post--Lie algebra its simply connected integrating Lie skew brace, and to a morphism the unique morphism integrating it. Then $\mathrm{Int}$ is an equivalence of categories.
\end{corollary}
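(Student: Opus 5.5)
The plan is to verify the standard criteria for an equivalence of categories: that $\mathrm{Int}$ is a well-defined functor, that it is fully faithful, and that it is essentially surjective. A quasi-inverse is provided by the restriction of the differentiation functor $D$ to $\mathbf{LSB}_{\mathrm{sc}}$.

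\emph{Well-definedness and functoriality.} For each integrable post-Lie algebra I fix a simply connected Lie skew brace integrating it. Such an object exists: if $(G,\cdot,\circ)$ is any connected integrating brace, then passing to the universal cover $\widetilde G$ of the manifold should produce a brace structure. Both group laws lift to $\widetilde G$ with a common identity, and identity \eqref{eqLSB} lifts because both sides are continuous maps $\widetilde G^3\to\widetilde G$ that agree at $(e,e,e)$ and cover the same map downstairs. Hence $\widetilde G$ is a simply connected Lie skew brace with the same post-Lie algebra. On morphisms, $\mathrm{Int}(\varphi)$ is the unique $F$ given by Proposition~\ref{prop:integration-morphisms-PLA}. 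The identities $\mathrm{Int}(\psi\varphi)=\mathrm{Int}(\psi)\mathrm{Int}(\varphi)$ and $\mathrm{Int}(\id)=\id$ then follow from the uniqueness clause of that proposition, since both sides are brace morphisms with the same differential at $e$, by the chain rule.

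\emph{Fully faithful.} For simply connected $G_1,G_2$, the map $\mathrm{Hom}_{\mathbf{PLA}}(\g_1,\g_2)\to\mathrm{Hom}_{\mathbf{LSB}}(G_1,G_2)$ given by $\varphi\mapsto\mathrm{Int}(\varphi)$ is injective, because $d\,\mathrm{Int}(\varphi)_e=\varphi$. To show it is surjective, I take a brace morphism $F$ and set $\varphi:=dF_e=D(F)$. This is a post-Lie morphism because $D$ is a functor, and uniqueness in Proposition~\ref{prop:integration-morphisms-PLA} gives $F=\mathrm{Int}(\varphi)$.

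\emph{Essential surjectivity.} Any simply connected Lie skew brace $(G,\cdot,\circ)$ integrates $D(G)$, so $D(G)$ lies in $\mathbf{PLA}_{\mathrm{int}}$. By the preceding corollary, applied with the identity isomorphism of $D(G)$, the brace $\mathrm{Int}(D(G))$ is isomorphic to $G$.

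The main obstacle is the existence of a simply connected integrating brace, namely the lifting of the brace identity to the universal cover. This should be settled by the standard uniqueness of lifts for maps out of the simply connected, hence connected, space $\widetilde G^3$. Everything else reduces to the uniqueness statements already established.
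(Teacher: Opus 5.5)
Your proposal is correct and follows essentially the same route as the paper: functoriality, faithfulness and fullness all come from the existence and uniqueness parts of Proposition~\ref{prop:integration-morphisms-PLA}, and essential surjectivity comes from the integrability of $D(G)$ together with the preceding corollary. You are slightly more careful than the paper in two places: you justify the existence of a simply connected integrating brace by lifting both group laws and the identity~\eqref{eqLSB} to the universal cover, which the paper leaves implicit (relying on the lifting lemma it quotes from earlier work); and you match injectivity on morphisms with faithfulness and surjectivity with fullness, whereas the paper's one-line justifications pair them the other way round.
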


\begin{proof}
By the proposition, $\mathrm{Int}$ is well defined and functorial. It is faithful, since morphisms between simply connected Lie skew braces are uniquely determined by their differential at the identity, and it is full, since every morphism of integrable post--Lie algebras integrates to a morphism of Lie skew braces.
It remains to prove that every simply connected Lie skew brace arises from an integrable post--Lie algebra. Let $(G,\cdot,\circ)$ be a simply connected Lie skew brace, and let
$(\mathfrak g,[\, ,\,]_\cdot,[\, ,\,]_\circ,\triangleright)$
be the associated post--Lie algebra. Then $\mathfrak g$ is integrable, and the simply connected Lie skew brace integrating it is isomorphic to $(G,\cdot,\circ)$.
Therefore $\mathrm{Int}$ is an equivalence of categories.
\end{proof}

The problem of determining which post-Lie algebras are integrable is highly
nontrivial and remains widely open; see, for example,
\cite{BaiGuoShengTang2023,Ebner2019postliealgebra,Burde2021crystallographicpostliealgebra,Burde2022rigiditypostliealgebra}.


\subsection{Ideals and simplicity}\label{subsec:ideals-simplicity}

We now recall the notion of ideal for a Lie skew brace, which is the natural
Lie-theoretic analogue of the corresponding notion for skew braces, and the
resulting definition of simplicity.

\begin{definition}\label{def:ideal-LSB}
Let $(G,\cdot,\circ)$ be a Lie skew brace. A subset $I\subseteq G$ is called an
\emph{ideal} of $(G,\cdot,\circ)$ if it is a connected closed Lie subgroup such
that
\begin{enumerate}[label=\textup{(\roman*)}]
    \item $I\trianglelefteq (G,\cdot)$;
    \item $I\trianglelefteq (G,\circ)$;
    \item $\lambda_a(I)=I$ for every $a\in G$.
\end{enumerate}
In this case we write
\[
I\trianglelefteq (G,\cdot,\circ).
\]
\end{definition}

\begin{example}\rm
Let $G_e$ be the identity component of the underlying manifold $G$. Then
$G_e\trianglelefteq (G,\cdot,\circ)$.
Indeed, $G_e$ is a connected closed normal Lie subgroup for both group laws,
and every $\lambda_a$ is a homeomorphism fixing the identity, hence preserves
$G_e$.
\end{example}

The notion of ideal is compatible with the Lie skew brace structure in the
expected way: quotients by ideals naturally inherit a Lie skew brace
structure, as the following proposition shows.

\begin{proposition}\label{prop:quotient-LSB}
Let $(G,\cdot,\circ)$ be a connected Lie skew brace and let
$I\trianglelefteq (G,\cdot,\circ)$. Then the quotient manifold $G/I$ carries a
natural Lie skew brace structure given by
\[
(aI)\cdot (bI):=(a\cdot b)I,
\qquad
(aI)\circ (bI):=(a\circ b)I,
\]
for all $a,b\in G$. Moreover, the quotient map
\[
\pi:G\to G/I
\]
is a morphism of Lie skew braces.
\end{proposition}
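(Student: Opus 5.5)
The plan is to first check that $I$ is normal for both group laws, so that $G/I$ inherits two Lie group structures by standard Lie theory. Then we show that these two structures are compatible in the sense of \eqref{eqLSB}.

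Since $I$ is a closed normal subgroup of $(G,\cdot)$ by Definition~\ref{def:ideal-LSB}(i), the quotient $(G,\cdot)/I$ is a Lie group. Its manifold structure is the unique one making $\pi$ a smooth submersion, and it is characterized purely by the coset partition $\{aI\}$. Here we must check that $\cdot$-cosets and $\circ$-cosets coincide, i.e.\ $a\cdot I=a\circ I$ for all $a$. This follows from $a\circ x=a\cdot\lambda_a(x)$ together with $\lambda_a(I)=I$ (condition (iii)). In particular $I$ is also a closed subgroup of $(G,\circ)$, normal by (ii), and $(G,\circ)/I$ is a Lie group on the same quotient manifold $G/I$. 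Both operations in the statement are therefore well defined and smooth.

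Next we verify \eqref{eqLSB} on $G/I$. Since $\pi$ is a homomorphism for both laws, it sends $\cdot$-inverses to $\cdot$-inverses, so $\pi(a^{-1})=(aI)^{-1}$ in $(G/I,\cdot)$. Applying $\pi$ to the identity $a\circ(b\cdot c)=(a\circ b)\cdot a^{-1}\cdot(a\circ c)$ in $G$ then gives the identity in $G/I$. Surjectivity of $\pi$ ensures this covers all triples. Finally, $\pi$ is smooth and is a homomorphism for both laws by construction, hence a morphism in $\mathbf{LSB}$.

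The main subtle step is the coincidence of the two quotient manifold structures, i.e.\ that $G/I$ is one manifold and not two. Once the coset equality $a\cdot I=a\circ I$ is established via $\lambda_a(I)=I$, the identity map between $(G,\cdot)/I$ and $(G,\circ)/I$ is a bijection commuting with the two quotient submersions from $G$. By the universal property of submersions it is therefore a diffeomorphism, and the remainder is formal.
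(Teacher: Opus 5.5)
Your proof is correct and follows the same route as the paper: both group laws descend to $G/I$, and the brace identity \eqref{eqLSB} is pushed down through the surjective map $\pi$, which is a homomorphism for both laws. The paper's proof only cites normality of $I$ in both groups, whereas you also show that the $\cdot$-cosets and $\circ$-cosets of $I$ coincide, via $a\circ I=a\cdot\lambda_a(I)=a\cdot I$; this step genuinely needs condition (iii) of Definition~\ref{def:ideal-LSB}, because normality alone does not suffice (for the brace $a\circ b=a+b+ab$ on the nilpotent algebra $x\R[x]/(x^4)$, the line $\R x^2$ is normal in both abelian groups, yet $x\circ \R x^2\neq x+\R x^2$).
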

\begin{proof}
This follows by a straightforward verification. Since \(I\) is a connected
closed normal Lie subgroup of both \((G,\cdot)\) and \((G,\circ)\), the quotient
manifold \(G/I\) carries Lie group structures for both operations, defined by
\[
(aI)\cdot (bI):=(a\cdot b)I,
\qquad
(aI)\circ (bI):=(a\circ b)I,
\]
for all \(a,b\in G\). These operations are well defined because \(I\) is normal
in both groups.

We now verify the skew brace identity on \(G/I\). For \(a,b,c\in G\), one has
\begin{align*}
(aI)\circ\bigl((bI)\cdot(cI)\bigr)
&=(aI)\circ\bigl((b\cdot c)I\bigr)\\
&=\bigl(a\circ (b\cdot c)\bigr)I\\
&=\bigl((a\circ b)\cdot a^{-1}\cdot (a\circ c)\bigr)I\\
&=((a\circ b)I)\cdot (aI)^{-1}\cdot ((a\circ c)I),
\end{align*}
which coincides with
\[
\bigl((aI)\circ (bI)\bigr)\cdot (aI)^{-1}\cdot \bigl((aI)\circ (cI)\bigr).
\]
Thus \((G/I,\cdot,\circ)\) is a Lie skew brace.
Finally, the quotient map \(\pi:G\to G/I\) is smooth and satisfies
\[
\pi(a\cdot b)=\pi(a)\cdot \pi(b),
\qquad
\pi(a\circ b)=\pi(a)\circ \pi(b),
\]
for all \(a,b\in G\). Therefore \(\pi\) is a morphism of Lie skew braces.
\end{proof}

\begin{definition}\label{def:simple-LSB}
A connected Lie skew brace $(G,\cdot,\circ)$ is said to be \emph{simple} if
its only ideals are $\{e\}$ and $G$.
\end{definition}

\begin{remark}\label{rem:simple-group-implies-simple-LSB}\rm
If either $(G,\cdot)$ or $(G,\circ)$ is a simple Lie group (in the sense of
Definition~\ref{def:simple-lie-group}), then
$(G,\cdot,\circ)$ is a simple Lie skew brace. Indeed, every ideal of
$(G,\cdot,\circ)$ is, in particular, a connected closed normal Lie subgroup of
both $(G,\cdot)$ and $(G,\circ)$.
\end{remark}

\subsection{The simple case: trivial and almost trivial Lie skew braces}\label{subsec:simple-rigidity}
\begin{definition}\label{def:simple-lie-group}
A connected Lie group $G$ is called \emph{simple} if it is nonabelian and has no
nontrivial proper connected closed normal Lie subgroups.
\end{definition}

\begin{remark}\rm
For connected Lie groups, this is equivalent to requiring that $\Lie(G)$ be a
simple Lie algebra. This notion should not be confused with simplicity in the sense of
abstract group theory, since a connected simple Lie group may still
have nontrivial discrete normal subgroups.
\end{remark}

We now isolate the case in which the multiplicative  Lie group is
simple. In this situation, the Lie skew brace structure is extremely rigid: the
only possibilities are the trivial one and the opposite one.

\begin{definition}\label{def:trivial-almost-trivial}
A Lie skew brace $(G,\cdot,\circ)$ is said to be \emph{trivial} if
\[
a\circ b=a\cdot b
\qquad\text{for all }a,b\in G.
\]
It is said to be \emph{almost trivial} if
\[
a\circ b=b\cdot a
\qquad\text{for all }a,b\in G,
\]
that is,
\[
\circ=\cdot^{\mathrm{opp}}.
\]
\end{definition}

The following infinitesimal rigidity statement is essentially contained in
\cite[Prop.~4.6]{Burde2012affineaction}. We state it here in the form needed
for the proof of Theorem~\ref{thm:simple-group-rigidity}.

\begin{proposition}\label{prop:PLA-simple-rigidity}
Let
$(\g,[\, ,\,]_{\cdot},[\, ,\,]_{\circ},\triangleright)$
be a real post-Lie algebra. Assume that
$(\g,[\, ,\,]_{\circ})$
is simple. Then
$(\g,[\, ,\,]_{\cdot})$
is simple as well, and exactly one of the following holds:
\begin{enumerate}[label=\textup{(\roman*)},leftmargin=2.4em]
    \item
    \[
    \triangleright=0
    \qquad\text{and}\qquad
    [\, ,\,]_{\circ}=[\, ,\,]_{\cdot};
    \]
    \item
    \[
    \xi\triangleright \eta=-[\xi,\eta]_{\cdot}
    \qquad\text{for all }\xi,\eta\in\g,
    \]
    and
    \[
    [\, ,\,]_{\circ}=-[\, ,\,]_{\cdot}.
    \]
\end{enumerate}
\end{proposition}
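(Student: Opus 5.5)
The plan is to study the linear map $L:\g\to\Der(\g,[\, ,\,]_{\cdot})$, $L(\xi)=\xi\triangleright(\cdot)$. Axiom (iii) of Definition~\ref{postLieAlgebras} says exactly that $L$ is a Lie algebra homomorphism from $(\g,[\, ,\,]_{\circ})$ into $\Der(\g,[\, ,\,]_{\cdot})$. Since $(\g,[\, ,\,]_{\circ})$ is simple, $\ker L$ is either $\g$ or $0$.

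\textbf{Case $\ker L=\g$.} Then $\triangleright=0$, and axiom (i) gives $[\, ,\,]_{\circ}=[\, ,\,]_{\cdot}$. Simplicity of $[\, ,\,]_{\cdot}$ is then immediate, and we are in alternative (i).

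\textbf{Case $L$ injective.} Set $\mathfrak s:=L(\g)\subseteq\Der(\g,[\, ,\,]_{\cdot})$; it is a simple Lie algebra of dimension $\dim\g$. Consider the auxiliary map $\phi(\xi):=\ad_{\cdot}(\xi)+L(\xi)$. By axiom (i),
\[
[\xi,\eta]_{\circ}=\phi(\xi)\eta-L(\eta)\xi .
\]
The key structural step is to show that $(\g,[\, ,\,]_{\cdot})$ is semisimple, and then that $L(\xi)=-\ad_{\cdot}(\xi)$.

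For semisimplicity, I would use the standard fact that $\Der(\g,[\, ,\,]_{\cdot})$ preserves the solvable radical $\mathfrak r$ and the nilradical of $(\g,[\, ,\,]_{\cdot})$. Consider $\mathfrak r$ together with $\mathfrak s$ acting on it. Using axiom (i), one checks that $\mathfrak r$ is $L$-invariant and $\ad_{\cdot}$-invariant, so $[\g,\mathfrak r]_{\circ}\subseteq\mathfrak r$. Hence $\mathfrak r$ is an ideal of $(\g,[\, ,\,]_{\circ})$, so $\mathfrak r=0$ or $\mathfrak r=\g$.

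The main obstacle is excluding $\mathfrak r=\g$, i.e.\ solvable $[\, ,\,]_{\cdot}$. This is where I would follow Burde--Dekimpe--Vercammen \cite[Prop.~4.6]{Burde2012affineaction}. Since $\mathfrak s$ is simple, hence semisimple, Weyl's theorem applies to the $\mathfrak s$-stable filtration of $\g$ by the derived series of $[\, ,\,]_{\cdot}$. Levi-type arguments then show that $\g$, as an $\mathfrak s$-module, decomposes compatibly. From this one derives that $(\g,[\, ,\,]_{\circ})$ would have a nonzero abelian ideal, namely the last nonzero term of the derived series, which is $\triangleright$-stable and $[\, ,\,]_{\circ}$-stable. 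That contradicts simplicity. I would simply cite loc.\ cit.\ for this step rather than redo it.

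Once $[\, ,\,]_{\cdot}$ is semisimple, every derivation is inner, so $L(\xi)=\ad_{\cdot}(\psi(\xi))$ for a unique linear bijection $\psi$ of $\g$. Axiom (iii) says $\psi:(\g,[\, ,\,]_{\circ})\to(\g,[\, ,\,]_{\cdot})$ is a Lie isomorphism. Therefore $[\, ,\,]_{\cdot}$ is simple, proving the first claim.

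Axiom (i) now reads
\[
\psi^{-1}[\psi\xi,\psi\eta]_{\cdot}=[\xi,\eta]_{\cdot}+[\psi\xi,\eta]_{\cdot}-[\psi\eta,\xi]_{\cdot}.
\]
Writing $\chi:=\psi+\id$, this rearranges to $[\psi\xi,\psi\eta]_{\cdot}=\psi[\chi\xi,\chi\eta]_{\cdot}-\psi[\xi,\eta]_{\cdot}$. I would then argue that the map $\xi\mapsto(\xi,\psi\xi)$ realizes $(\g,[\, ,\,]_{\circ})$ as a subalgebra of $\g_{\cdot}\oplus\g_{\cdot}$ complementary to the diagonal. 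This is the infinitesimal version of the paper's later strategy. By the Onishchik-type classification of complements to the diagonal in $\mathfrak h\oplus\mathfrak h$ for simple $\mathfrak h$, the complement must be $\{(\xi,0)\}$ or $\{(0,\xi)\}$. The first forces $\psi=0$, which contradicts injectivity. The second corresponds to $L(\xi)=-\ad_{\cdot}(\xi)$. In that case $\xi\triangleright\eta=-[\xi,\eta]_{\cdot}$, and axiom (i) gives $[\, ,\,]_{\circ}=-[\, ,\,]_{\cdot}$, which is alternative (ii).

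The two alternatives are mutually exclusive because $[\, ,\,]_{\cdot}\neq0$. I expect the solvable-exclusion step and the complement-to-diagonal step to carry all the weight, and I would cite \cite{Burde2012affineaction} for both.
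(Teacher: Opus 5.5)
The paper gives no independent argument here: it quotes \cite[Prop.~4.6]{Burde2012affineaction} and notes that the one extra ingredient is Jacobson's fixed-point theorem. Your sketch tries to supply more structure, but two steps you do not outsource are wrong. First, the reduction to ``$\mathfrak r=0$ or $\mathfrak r=\g$'' fails. By axiom (i), $[\xi,r]_{\circ}=[\xi,r]_{\cdot}+L(\xi)r-L(r)\xi$. Invariance of the radical under derivations only gives $L(\g)\mathfrak r\subseteq\mathfrak r$. Nothing puts $L(r)\xi$ in $\mathfrak r$, since it is a derivation evaluated at an arbitrary vector. This genuinely fails for post-Lie algebras. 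On $\mathfrak{su}(2)\oplus\R c$ put $\xi\triangleright\eta=[\psi\xi,\eta]_{\cdot}$ with $\psi(s+\gamma c)=\gamma t$ for a fixed $0\neq t\in\mathfrak{su}(2)$. Then $\psi$ is a Rota--Baxter operator of weight $1$, so this is post-Lie, yet $[s,c]_{\circ}=[s,t]_{\cdot}\notin\R c$. The same defect sits in your sketch of the solvable case. The last derived term is only $L(\g)$-stable, is not shown to be a $\circ$-ideal, and need not be $\circ$-abelian, because on it $[a,b]_{\circ}=a\triangleright b-b\triangleright a$. So nothing you wrote proves that $(\g,[\, ,\,]_{\cdot})$ is semisimple, which is the real content of the first assertion. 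In particular, the mixed case $0\neq\mathfrak r\neq\g$ is never addressed.

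Second, the endgame has two problems. The map $\xi\mapsto(\xi,\psi\xi)$ is not a homomorphism on $(\g,[\, ,\,]_{\circ})$. The bracket of two such pairs is $([\xi,\eta]_{\cdot},\psi[\xi,\eta]_{\circ})$, which lies on the graph of $\psi$ only if $[\, ,\,]_{\circ}=[\, ,\,]_{\cdot}$. The correct embedding is $\xi\mapsto(\xi+\psi\xi,\psi\xi)$, the infinitesimal form of \eqref{eq:Phi_def_simple_sec_final}; with your map the alternative $\{(0,\xi)\}$ could never occur. Moreover, subalgebras complementary to the diagonal in $\mathfrak h\oplus\mathfrak h$ are not only $\mathfrak h\oplus 0$ and $0\oplus\mathfrak h$. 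With $h,e,f$ the standard basis, $\operatorname{span}\{(h,-h),(e,0),(0,f)\}$ is a solvable complement in $\mathfrak{sl}_2(\R)\oplus\mathfrak{sl}_2(\R)$. What you must use is that the complement is isomorphic to the simple algebra $(\g,[\, ,\,]_{\circ})$. Concretely, $\id+\psi:(\g,[\, ,\,]_{\circ})\to(\g,[\, ,\,]_{\cdot})$ is a Lie homomorphism, so it is either $0$ or bijective. If it is $0$, then $\psi=-\id$, which gives alternative (ii). If it is bijective, then $\theta:=\psi(\id+\psi)^{-1}$ is an automorphism of $(\g,[\, ,\,]_{\cdot})$ with $\theta-\id=-(\id+\psi)^{-1}$ invertible, i.e.\ $\theta$ has no nonzero fixed vectors. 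Your proposal never sees this third case. Ruling it out requires a fixed-point theorem for automorphisms of semisimple Lie algebras in characteristic $0$, which is precisely the role Jacobson's theorem plays in the paper's proof.
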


\begin{proof}
This is the simple case of \cite[Proposition~4.6]{Burde2012affineaction}. The
argument given there is formulated over a field of characteristic \(0\), and
therefore applies in particular over \(\R\). The only additional input used in
the exclusion of the third case is Jacobson's fixed-point theorem; for the
required form over characteristic \(0\), see \cite[\S 5, Theorem~9]{Jac62}.
\end{proof}

We now pass from the infinitesimal statement to Lie skew braces.

\begin{theorem}\label{thm:simple-group-rigidity}
Let $(G,\cdot,\circ)$ be a connected Lie skew brace. Assume that
$(G,\circ)$ is a simple Lie group. Then $(G,\cdot)$ is a simple Lie group as
well, and
\[
\circ=\cdot
\qquad\text{or}\qquad
\circ=\cdot^{\mathrm{opp}}.
\]
Equivalently, $(G,\cdot,\circ)$ is either trivial or almost trivial.
\end{theorem}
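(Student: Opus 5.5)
We pass to the Lie algebra level and then integrate back, using that a connected Lie group law is determined by its Lie algebra together with the underlying manifold and the identity. Let $(\g,[\, ,\,]_{\cdot},[\, ,\,]_{\circ},\triangleright)=D(G,\cdot,\circ)$. Since $(G,\circ)$ is a simple Lie group, $(\g,[\, ,\,]_{\circ})$ is a simple Lie algebra. Proposition~\ref{prop:PLA-simple-rigidity} then gives two conclusions. First, $(\g,[\, ,\,]_{\cdot})$ is simple, so $(G,\cdot)$ is a simple Lie group. Second, either $\triangleright=0$ and $[\, ,\,]_{\circ}=[\, ,\,]_{\cdot}$, or $\xi\triangleright\eta=-[\xi,\eta]_{\cdot}$ and $[\, ,\,]_{\circ}=-[\, ,\,]_{\cdot}$.

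\emph{Case (i).} Here $\lambda_{*e}=0$. Since $(G,\circ)$ is connected and $\lambda$ is a smooth homomorphism into $\Aut(G,\cdot)$, it follows that $\lambda\equiv\id$. Then $a\circ b=a\cdot\lambda_a(b)=a\cdot b$, so the brace is trivial.

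\emph{Case (ii).} Here $\lambda_{*e}(\xi)=-\ad_{\cdot}(\xi)$. Consider the smooth homomorphism $\mu:(G,\circ)\to\Aut(G,\cdot)$ defined by $\mu_a:=c_{a^{-1}}$, conjugation by the $\cdot$-inverse of $a$, i.e.\ $\mu_a(b)=a^{-1}\cdot b\cdot a$. We cannot assume in advance that $\mu$ is a homomorphism for $\circ$, so we argue as follows. The map $a\mapsto c_{a^{-1}}$ is an anti-homomorphism from $(G,\cdot)$, and hence a homomorphism from $(G,\cdot^{\mathrm{opp}})$. The Lie algebra of $(G,\cdot^{\mathrm{opp}})$ is $(\g,-[\, ,\,]_{\cdot})=(\g,[\, ,\,]_{\circ})$. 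So $(G,\circ)$ and $(G,\cdot^{\mathrm{opp}})$ are connected Lie group structures on the same manifold with the same identity and the same Lie algebra, and we want to conclude that they coincide.

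The cleanest route is to compare the two group laws directly. The maps $\sigma:a\mapsto \lambda_a^{-1}$ and $\tau:a\mapsto c_a$ both send $(G,\circ)$ into $\Aut(G,\cdot)$. The map $\sigma$ is an anti-homomorphism with differential $\ad_\cdot$ (up to sign conventions). We want to show $\lambda_a=c_{a^{-1}}$ for all $a$.

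Set $\psi(a):=\lambda_a\circ c_a$. The differential of $\psi$ at $e$ vanishes, and we want to show that $\psi$ is constant. Since $\psi$ is not obviously a homomorphism, we use one-parameter subgroups instead. For $\xi\in\g$, let $\gamma(t)=\exp_\circ(t\xi)$. Then $t\mapsto\lambda_{\gamma(t)}$ is the one-parameter group $\exp(-t\,\ad_\cdot\xi)$ at the derivation level.

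The key claim is that $\exp_\circ(t\xi)=\exp_\cdot(-t\xi)$. We expect this step to be the main obstacle. To prove it, differentiate the curve $\beta(t)=\exp_\cdot(-t\xi)$ using $a\circ b=a\cdot\lambda_a(b)$ and check that $\beta$ is a $\circ$-one-parameter subgroup. Concretely, we verify $\beta(s)\circ\beta(t)=\beta(s)\cdot\lambda_{\beta(s)}(\beta(t))$. This requires $\lambda_{\beta(s)}$ to fix $\beta(t)$. That holds because $\lambda_{\beta(s)}$ acts by an automorphism whose differential is a function of $\ad_\cdot\xi$, and such an automorphism fixes $\exp_\cdot(\R\xi)$ pointwise. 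If the direct verification is awkward, we instead use uniqueness of ODE solutions: both $\gamma$ and $\beta$ solve the left-invariant ODE for $\xi$ in $(G,\circ)$.

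Once the claim holds, $\lambda_{\exp_\cdot(-t\xi)}=\exp(-t\,\ad_\cdot\xi)=c_{\exp_\cdot(-t\xi)}$ at the level of differentials. Hence $\lambda_a=c_{a^{-1}}$ for all $a$ in the image of the $\cdot$-exponential. This image generates $(G,\cdot)$. Moreover, $\lambda_a=c_{a^{-1}}$ holds on a $\circ$-generating set, because the $\circ$-exponential equals the $\cdot$-exponential up to sign. We then extend to all of $G$ using that $\lambda$ is a homomorphism on $(G,\circ)$ and that $a\mapsto c_{a^{-1}}$ is compatible with the relation $a\circ b=a\cdot\lambda_a(b)$ along words in exponentials. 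An alternative is an inductive connectivity argument: show that the set $\{a:\lambda_a=c_{a^{-1}}\}$ is closed under $\circ$ in that way.

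Finally, with $\lambda_a=c_{a^{-1}}$ for all $a$, we get $a\circ b=a\cdot a^{-1}\cdot b\cdot a=b\cdot a$. So $\circ=\cdot^{\mathrm{opp}}$, and the brace is almost trivial.
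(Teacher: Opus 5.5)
Your reduction via Proposition~\ref{prop:PLA-simple-rigidity} and your Case~(i) are fine. There is a genuine gap in Case~(ii), at your key claim $\exp_\circ(t\xi)=\exp_\cdot(-t\xi)$.

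First, the claim is false as stated. The curve $t\mapsto\exp_\cdot(-t\xi)$ has velocity $-\xi$ at $t=0$, so it cannot be $\exp_\circ(t\xi)$. When $\circ=\cdot^{\mathrm{opp}}$ one actually has $\exp_\circ(t\xi)=\exp_\cdot(t\xi)$, because one-parameter subgroups of $(G,\cdot)$ are also one-parameter subgroups of $(G,\cdot^{\mathrm{opp}})$ with the same velocity. With your sign, the next line would give $\lambda_a=c_a$ on the exponential image, not $c_{a^{-1}}$, so ``hence $\lambda_a=c_{a^{-1}}$'' does not follow.

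Second, and more seriously, your justification is circular in both forms. To show that $\beta(t)=\exp_\cdot(t\xi)$ is a $\circ$-one-parameter subgroup, you need $\lambda_{\beta(s)}(\beta(t))=\beta(t)$. To show that $\beta$ solves the $\circ$-left-invariant ODE, you need $d(\lambda_{\beta(t)})_e\xi=\xi$. But you only control $\lambda$ through its homomorphism property on $(G,\circ)$, that is, at $\circ$-exponentials and their $\circ$-products. So you cannot evaluate $\lambda$ at $\beta(s)$ before knowing that $\beta(s)$ is a $\circ$-exponential, which is exactly what you are trying to prove.

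The fix is to run the ODE argument in the other direction.
\begin{enumerate}[label=\textup{(\arabic*)}]
\item The map $t\mapsto\lambda_{\exp_\circ(t\xi)}$ is a one-parameter subgroup of $\Aut(G,\cdot)$ with derivative $-\ad_\cdot\xi$. An automorphism of the connected group $(G,\cdot)$ is determined by its differential, so $\lambda_{\exp_\circ(t\xi)}=c_{\exp_\cdot(-t\xi)}$.
\item From $a\circ b=a\cdot\lambda_a(b)$, the left translations satisfy $d(L^\circ_a)_e=d(L^\cdot_a)_e\,d(\lambda_a)_e$. Hence $\gamma(t)=\exp_\circ(t\xi)$ satisfies
\[
\gamma'(t)=d(L^\cdot_{\gamma(t)})_e\,\mathrm{Ad}_\cdot\bigl(\exp_\cdot(-t\xi)\bigr)\xi=d(L^\cdot_{\gamma(t)})_e\,\xi .
\]
So $\gamma$ solves the $\cdot$-left-invariant ODE, and $\exp_\circ(t\xi)=\exp_\cdot(t\xi)$. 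Therefore $\lambda_a=c_{a^{-1}}$ on $\exp_\circ(\g)$.
\item The extension to all of $G$ still needs the explicit computation. If $\lambda_a=c_{a^{-1}}$ and $\lambda_b=c_{b^{-1}}$, then $a\circ b=b\cdot a$ and $\lambda_{a\circ b}=c_{a^{-1}}c_{b^{-1}}=c_{(b\cdot a)^{-1}}$. So the set where $\lambda_a=c_{a^{-1}}$ is closed under $\circ$.
\item The set $\exp_\circ(\g)$ is closed under $\circ$-inverses and generates the connected group $(G,\circ)$. Hence that set is all of $G$.
\end{enumerate}

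With these repairs, your direct argument on $G$ is a valid and more elementary alternative to the paper's proof. The paper instead lifts to the universal cover and notes that $D(\widetilde G,\widetilde\cdot,\widetilde\circ)=D(\widetilde G,\widetilde\cdot,\widetilde\cdot^{\mathrm{opp}})$. It then uses Proposition~\ref{prop:integration-morphisms-PLA} to integrate $\id_\g$ to a brace morphism, which must be the identity.
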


\begin{proof}
Let
$\pi:\widetilde G\to G$
be the universal covering map. By \cite[Lemma~3.2]{DameleLoi2026}, the
universal covering manifold \(\widetilde G\) carries a lifted Lie skew brace
structure
$(\widetilde G,\widetilde\cdot,\widetilde\circ)$
such that \(\pi\) is a morphism for both group laws. In particular,
\((\widetilde G,\widetilde\circ)\) is connected and simply connected, and its
Lie algebra is naturally identified with
$\Lie(G,\circ)$.
Let
$D(\widetilde G,\widetilde\cdot,\widetilde\circ)
=
(\g,[\, ,\,]_{\widetilde\cdot},[\, ,\,]_{\widetilde\circ},\triangleright)$
be the associated post-Lie algebra. Since \((G,\circ)\) is a connected simple
Lie group, its Lie algebra
$(\g,[\, ,\,]_{\widetilde\circ})$
is simple. Therefore Proposition~\ref{prop:PLA-simple-rigidity} applies, and
shows in particular that
$(\g,[\, ,\,]_{\widetilde\cdot})$
is simple as well. Hence \((G,\cdot)\) is a connected simple Lie group.

We now distinguish the two cases given by
Proposition~\ref{prop:PLA-simple-rigidity}.

\medskip
\noindent
\textbf{Case 1.}
Assume that
\[
\triangleright=0
\qquad\text{and}\qquad
[\, ,\,]_{\widetilde\circ}=[\, ,\,]_{\widetilde\cdot}.
\]
By definition of the post-Lie product,
\[
\xi\triangleright \eta
=
(\widetilde\lambda_{*e}(\xi))(\eta),
\qquad \xi,\eta\in\g,
\]
where
\[
\widetilde\lambda:(\widetilde G,\widetilde\circ)\to
\Aut(\widetilde G,\widetilde\cdot)
\]
is the lifted lambda-action. Thus \(\triangleright=0\) means precisely that
$\widetilde\lambda_{*e}=0$.
Since \(\widetilde\lambda\) is a Lie group homomorphism and
\((\widetilde G,\widetilde\circ)\) is connected, it follows that
\(\widetilde\lambda\) is trivial. Therefore
\[
a\widetilde\circ b
=
a\widetilde\cdot \widetilde\lambda_a(b)
=
a\widetilde\cdot b
\qquad\text{for all }a,b\in \widetilde G,
\]
so
$\widetilde\circ=\widetilde\cdot.$
Since \(\pi\) is a homomorphism for both operations, we conclude that
\[
a\circ b=a\cdot b, \ \text{for all }a,b\in G.
\]

\medskip
\noindent
\textbf{Case 2.}
Assume that
\[
\xi\triangleright \eta=-[\xi,\eta]_{\widetilde\cdot}
\qquad\text{for all }\xi,\eta\in\g,
\]
and
\[
[\, ,\,]_{\widetilde\circ}=-[\, ,\,]_{\widetilde\cdot}.
\]
Then
\[
D(\widetilde G,\widetilde\cdot,\widetilde\circ)
=
(\g,[\, ,\,]_{\widetilde\cdot},- [\, ,\,]_{\widetilde\cdot},\triangleright).
\]
Consider now the opposite Lie group
\[
(\widetilde G,\widetilde\cdot^{\mathrm{opp}}),
\qquad
a\widetilde\cdot^{\mathrm{opp}} b:=b\widetilde\cdot a.
\]
Its Lie algebra is
\[
\Lie(\widetilde G,\widetilde\cdot^{\mathrm{opp}})
=
(\g,-[\, ,\,]_{\widetilde\cdot}).
\]
Moreover, the lambda-action of the Lie skew brace
\[
(\widetilde G,\widetilde\cdot,\widetilde\cdot^{\mathrm{opp}})
\]
is given by
\[
\lambda_a(b)
=
a^{-1}\widetilde\cdot
(a\widetilde\cdot^{\mathrm{opp}} b)
=
a^{-1}\widetilde\cdot (b\widetilde\cdot a),
\qquad a,b\in \widetilde G.
\]
Differentiating at the identity, one obtains
\[
(\lambda_{*e}(\xi))(\eta)=-[\xi,\eta]_{\widetilde\cdot}
\qquad\text{for all }\xi,\eta\in\g.
\]
Hence the associated post-Lie algebra is
\[
D(\widetilde G,\widetilde\cdot,\widetilde\cdot^{\mathrm{opp}})
=
(\g,[\, ,\,]_{\widetilde\cdot},- [\, ,\,]_{\widetilde\cdot},\triangleright).
\]
Therefore
$D(\widetilde G,\widetilde\cdot,\widetilde\circ)
=
D(\widetilde G,\widetilde\cdot,\widetilde\cdot^{\mathrm{opp}})$.
By Proposition~\ref{prop:integration-morphisms-PLA}, there exists a unique
morphism of Lie skew braces
\[
F:(\widetilde G,\widetilde\cdot,\widetilde\circ)\longrightarrow
(\widetilde G,\widetilde\cdot,\widetilde\cdot^{\mathrm{opp}})
\]
such that
$F_{*e}=\id_{\g}$.
In particular, \(F\) is a Lie group homomorphism
\[
F:(\widetilde G,\widetilde\cdot)\longrightarrow
(\widetilde G,\widetilde\cdot)
\]
whose differential at the identity is the identity on \(\g\). Since
\((\widetilde G,\widetilde\cdot)\) is connected and simply connected,
uniqueness of integration of Lie algebra morphisms implies that
$F=\id_{\widetilde G}$.
Since \(F\) is a morphism of Lie skew braces, for all \(a,b\in \widetilde G\)
one has
\[
F(a\widetilde\circ b)=F(a)\widetilde\cdot^{\mathrm{opp}}F(b).
\]
As \(F=\id_{\widetilde G}\), it follows that
\[
a\widetilde\circ b
=
a\widetilde\cdot^{\mathrm{opp}} b
=
b\widetilde\cdot a
\qquad\text{for all }a,b\in \widetilde G.
\]
Therefore
$\widetilde\circ=\widetilde\cdot^{\mathrm{opp}}$.
Passing to the quotient via \(\pi\), we obtain
\[
a\circ b=b\cdot a, \ \text{for all }a,b\in G,
\]
that is,
$\circ=\cdot^{\mathrm{opp}}$.
Thus \((G,\cdot,\circ)\) is almost trivial.
\end{proof}

\begin{remark}\label{rem:additive-simple-not-multiplicative-simple}\rm
The converse of Theorem~\ref{thm:simple-group-rigidity} is false in general.
Indeed, there exist connected Lie skew braces $(G,\cdot,\circ)$ for which the
additive Lie group $(G,\cdot)$ is simple, whereas the multiplicative Lie group
$(G,\circ)$ is solvable, hence not simple; see \cite{DameleLoi2026}.
\end{remark}

\begin{remark}\label{rem:simple-integrable-PLA}\rm
As an immediate infinitesimal consequence of
Theorem~\ref{thm:simple-group-rigidity}, every integrable real post-Lie algebra
whose circle Lie algebra is simple arises from either a trivial or an almost
trivial Lie skew brace.
\end{remark}

The previous rigidity result admits a natural extension to the compact case
without connectedness assumptions, linking the Lie-theoretic setting with the
finite case discussed in the introduction.

\begin{corollary}\label{cor:compact-abstractly-simple}
Let $(G,\cdot,\circ)$ be a compact Lie skew brace, not necessarily connected.
Assume that the multiplicative group $(G,\circ)$ is simple as an abstract
group. Then $(G,\cdot,\circ)$ is either trivial or almost trivial.
\end{corollary}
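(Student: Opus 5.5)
Let $G_e$ denote the identity component of the underlying manifold, which is an ideal by the Example following Definition~\ref{def:ideal-LSB}. Note that $G_e$ is the same set whether computed with $\cdot$ or $\circ$, and it is a normal subgroup of $(G,\circ)$. Since $(G,\circ)$ is simple as an abstract group, either $G_e=\{e\}$ or $G_e=G$. We treat the two cases separately.

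\textbf{Case $G_e=\{e\}$.} Then $G$ is discrete, and since it is compact it is finite. So $(G,\cdot,\circ)$ is a finite skew brace whose multiplicative group is simple. The plan here is purely algebraic, via the lambda-action $\lambda:(G,\circ)\to\Aut(G,\cdot)$. Its kernel is a normal subgroup of $(G,\circ)$, so it is either all of $G$ or trivial.
\begin{itemize}
\item If $\ker\lambda=G$, then $\lambda_a=\id$ for every $a$, and $a\circ b=a\cdot\lambda_a(b)=a\cdot b$, so the brace is trivial.
\item If $\lambda$ is injective, I would invoke the known finite-brace fact: for a skew brace, $\ker\lambda$ and related structures force the additive group to be tied to the multiplicative one. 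Concretely, I would look at the map $a\mapsto(a,\lambda_a)$ into $\mathrm{Hol}(G,\cdot)$ and at the subgroup $\lambda(G)\cdot$ Inn.
\end{itemize}
The cleanest route is to cite the finite classification: a skew brace with simple multiplicative group is trivial or almost trivial. I expect this to be the main obstacle, since it is a genuinely finite-group statement (it relies on the result that a group with a regular subgroup of the holomorph that is simple must be "the same" group, which in general uses CFSG-type facts). If a reference is not available, I would note that if $(G,\circ)$ is abelian simple, i.e.\ cyclic of prime order, then $(G,\cdot)$ has prime order too and is cyclic. In that case $\lambda$ lands in $\Aut(\mathbb Z_p)$, of order $p-1$, so $\lambda$ is trivial and the brace is trivial. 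The nonabelian case is handled by citing the known theorem.

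\textbf{Case $G_e=G$.} Then $G$ is connected, compact, and $(G,\circ)$ is abstractly simple. A nontrivial connected compact abstractly simple Lie group is nonabelian: an abelian one is a torus, which has many finite subgroups. Its Lie algebra is then simple, since any proper ideal would integrate to a nontrivial proper connected normal subgroup. Hence $(G,\circ)$ is a simple Lie group in the sense of Definition~\ref{def:simple-lie-group}, and Theorem~\ref{thm:simple-group-rigidity} gives that the brace is trivial or almost trivial. The degenerate case $G=\{e\}$ is trivially trivial.
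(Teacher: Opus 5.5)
Your proposal is correct and follows essentially the same route as the paper: you split according to whether the identity component $G_e$, a normal subgroup of $(G,\circ)$, is $\{e\}$ or $G$. In the finite case you cite the known finite rigidity results (Byott et al., which rely on CFSG-type facts), and in the connected case you apply Theorem~\ref{thm:simple-group-rigidity}. Your extra details --- the direct treatment of $(G,\circ)\cong\mathbb Z_p$ and the justification that a connected compact abstractly simple group has simple Lie algebra --- are sound; the vague ``injective $\lambda$'' bullet is superfluous, since you end up citing the finite theorem exactly as the paper does.
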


\begin{proof}
Let $G_e$ be the identity component of the underlying manifold $G$,
equivalently of the topological group $(G,\circ)$. Then
$G_e\trianglelefteq (G,\circ)$.
Since $(G,\circ)$ is simple as an abstract group, either
\[
G_e=G
\qquad\text{or}\qquad
G_e=\{e\}.
\]

If $G_e=G$, then $(G,\circ)$ is connected. Hence $(G,\circ)$ is a connected
simple Lie group, and the conclusion follows from
Theorem~\ref{thm:simple-group-rigidity}.
If $G_e=\{e\}$, then $G$ is totally disconnected. Since $G$ is compact, it is
finite. Thus $(G,\cdot,\circ)$ is a finite skew brace, and the conclusion
follows from the corresponding rigidity results for finite skew braces; see
\cite{Byott2004,Byott2024,BBEJP}.
\end{proof}


\section{Connected compact simple LSBs}\label{sec:compact-simple}

In the compact connected setting, simplicity is extremely restrictive and
forces the underlying Lie groups to be simple, except for the one-dimensional
torus. Before proving this rigidity result, it is worth stressing that this
phenomenon is genuinely specific to the compact connected Lie setting. 
In the noncompact case, there exist simple Lie skew braces for which
neither of the two underlying Lie groups is simple, and in fact both
may be solvable; see Example~\ref{ex:A1-1-model}. Similar phenomena
already occur in the finite skew brace setting; see
Remark~\ref{rem:finite-simple-skew-braces-solvable}.
By contrast, the compact connected Lie case is much more rigid, as we now
show.

\begin{theorem}\label{thm:main-compact-simple}
Let $(G,\cdot,\circ)$ be a compact connected Lie skew brace, and assume that it
is simple. Then exactly one of the following holds:
\begin{enumerate}[label=\textup{(\alph*)}]
    \item $(G,\cdot)=(G,\circ)=S^1$;
    \item $(G,\cdot)$ and $(G,\circ)$ are simple Lie groups, and
    \[
    \circ=\cdot
    \qquad\text{or}\qquad
    \circ=\cdot^{\mathrm{opp}}.
    \]
\end{enumerate}
\end{theorem}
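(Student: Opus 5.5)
The plan is to reduce to two structural pieces of the compact connected group $(G,\cdot)$, namely its central torus and its semisimple part, and to show that each of them gives an ideal of the brace. The key input is the lambda-action $\lambda:(G,\circ)\to\Aut(G,\cdot)$. Since $(G,\circ)$ is connected, $\lambda$ lands in the identity component $\Aut(G,\cdot)_0$. For a compact connected Lie group, $\Aut(G,\cdot)_0=\Inn(G,\cdot)$, because the outer automorphism group is discrete. So every $\lambda_a$ is inner, and in particular $\lambda_a$ fixes $Z:=Z(G,\cdot)_e$ pointwise.

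I will first show that $Z$ and the commutator subgroup $S:=[G,G]_\cdot$ (closed, connected, semisimple) are ideals. Both are characteristic in $(G,\cdot)$, hence $\lambda$-invariant and $\cdot$-normal. This also gives $a\circ I=a\cdot\lambda_a(I)=a\cdot I$ for $I\in\{Z,S\}$. The condition that still needs checking is normality in $(G,\circ)$, i.e.\ $I\circ a=a\circ I$. For $z\in I$ we have $z\circ a=z\cdot\lambda_z(a)$, and $\lambda_z$ is conjugation by some element $g$. I would like to arrange $g\in I$ after adjusting by $Z$, so that $z\cdot g a g^{-1}\in a\cdot I$.

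For $I=Z$ this works directly. $\lambda|_Z:(Z,\circ)\to\Inn(G,\cdot)$ is a homomorphism, and $\circ=\cdot$ on $Z$ because $\lambda$ fixes $Z$ pointwise, so $(Z,\circ)$ is a torus. Its image in $\Inn(G,\cdot)\cong G/Z(G)$ therefore lies in a torus. Conjugations in $\lambda(Z)$ act trivially modulo $[G,G]$, which is enough to give $Z\circ a\subseteq a\cdot Z\cdot S$. That is weaker than $Z\circ a=a\circ Z$ and I still need to close this gap. For $I=S$ the same argument works, since $G/S$ is abelian and inner automorphisms are trivial on it.

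By simplicity, either $G=Z$ or $G=S$. If $G$ is a torus, $\Aut(G,\cdot)$ is discrete, so $\lambda$ is trivial and the brace is trivial. Every connected closed subgroup is then an ideal, which forces $\dim G=1$, i.e.\ case (a).

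In the semisimple case I pass to the universal cover using \cite[Lemma~3.2]{DameleLoi2026}; it is compact because the center is finite. There $\lambda_a=\mathrm{conj}(\varphi(a))$ for a unique smooth $\varphi:\widetilde G\to\widetilde G$, by simple connectivity and centerlessness of $\Inn$ lifted appropriately. The homomorphism property of $\lambda$ shows that
\[
a\longmapsto \bigl(a\cdot\varphi(a),\ \varphi(a)\bigr)
\]
is a homomorphism $(\widetilde G,\circ)\to \widetilde G\times\widetilde G$. Simple transitivity of the affine action (Theorem~\ref{thm:LSB-affine-correspondence}) says that its image $H$ meets the diagonal $\Delta$ trivially and $H\cdot\Delta=\widetilde G\times\widetilde G$. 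On Lie algebras this gives $\g\oplus\g=\mathfrak h\oplus\Delta\g$ as vector spaces, with $\mathfrak h$ a subalgebra.

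Ozeki's proposition \cite{Ozeki1977} classifies such complements in the compact case. $\mathfrak h$ decomposes along the simple ideals $\g_i$ of $\g$, with each $\g_i\oplus\g_i$ containing either $\g_i\oplus0$ or $0\oplus\g_i$ (up to a graph). This should show that each simple factor of $(\widetilde G,\cdot)$, projected to $G$, is $\circ$-normal and $\lambda$-invariant, hence an ideal. Simplicity then forces $(G,\cdot)$ to be simple. In that case $\mathfrak h$ is $\g\oplus0$ or $0\oplus\g$, so $(\g,[\,,\,]_\circ)\cong\g$ is simple. Theorem~\ref{thm:simple-group-rigidity} then gives $\circ=\cdot$ or $\circ=\cdot^{\mathrm{opp}}$, which is case (b). The two cases are exclusive since $S^1$ is abelian.

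I expect two obstacles. The first is making Ozeki's complement result yield genuine brace ideals in the multi-factor case, which amounts to checking $\circ$-normality factorwise. The second is the precise verification that $Z$ is $\circ$-normal.
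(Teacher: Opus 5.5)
You follow the paper's route: $S=[G,G]_\cdot$ is an ideal, the torus case, the lift $\gamma$ of the inner lambda-action, the complement $a\mapsto(a\gamma(a),\gamma(a))$ of $\Delta$, Ozeki, and Theorem~\ref{thm:simple-group-rigidity}. Your proof that $S$ is an ideal is correct: $\lambda_z(a)\cdot a^{-1}$ is a commutator because $\lambda_z$ is inner. Reading off simplicity of $\Lie(G,\circ)$ from $\mathfrak h\cong I$ is a valid substitute for the paper's use of \cite{Boekholt1998}.

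Drop the claim that $Z=Z(G,\cdot)_e$ is an ideal, because it is false. On $S^1\times SU(2)$ with $S^1=\R/2\pi\mathbb Z$, take $\lambda_{(t,s)}=\id\times\Inn_{\exp(tX)}$, where $X\in\mathfrak{su}(2)$ and $\exp(2\pi X)=-1$. Then $(t,1)*(t',s')=(0,\exp(tX)s'\exp(-tX)s'^{-1})\notin Z$. So the gap you flagged cannot be closed. You do not need it: simplicity applied to $S$ alone gives $S=\{e\}$ or $S=G$.

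The genuine gap is the multi-factor semisimple step, where the mechanism you propose fails. Let $L$ be compact, simple and simply connected, and let $G=L\times L$ with the product law. Set $\gamma(a_1,a_2)=(a_2,e)$ and $a\circ b=a\gamma(a)b\gamma(a)^{-1}=(a_1a_2b_1a_2^{-1},\,a_2b_2)$. This is a compact connected Lie skew brace with inner lambda-action. In it:
\begin{itemize}
\item $\mathfrak h=\{((u_1,u_2),(u_2,0))\}$ meets $\mathfrak s_2\oplus\mathfrak s_2$ trivially, so it does not split along the $\mathfrak s_i\oplus\mathfrak s_i$;
\item $L\times\{e\}$ is an ideal, but $\{e\}\times L$ is not, since for $x=(e,x_2)$ one gets $x*a=(x_2a_1x_2^{-1}a_1^{-1},e)$.
\end{itemize}
So you cannot prove that each simple factor is an ideal. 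What you must show is that \emph{some} nonempty proper $\Lambda$ satisfies $\gamma(S_\Lambda)\subseteq S_\Lambda$. Because $S_\Lambda$ is automatically a $\lambda$-stable, $\cdot$-normal $\circ$-subgroup and $x*a=\gamma(x)a\gamma(x)^{-1}a^{-1}$, this is exactly the ideal condition.

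One way to get it: Ozeki's output means $\mathfrak h=\{w+f(w):w\in I\}$ for a homomorphism $f:I\to I'$.
\begin{itemize}
\item If $\ker f\neq0$, it contains a simple ideal of $\mathfrak f$. That ideal lies in $\mathfrak h$ and equals $\mathfrak s_i\oplus0$ or $0\oplus\mathfrak s_i$, i.e.\ $\gamma_{*}|_{\mathfrak s_i}=0$ or $-\id$. Hence $\gamma(S_i)\subseteq S_i$ and $S_i$ is an ideal.
\item If $f$ is bijective, then $\mathfrak h$ and $\mathfrak d$ are the fixed algebras of the involutions $\theta=f\oplus f^{-1}$ and the swap $\sigma$. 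Let $m$ be the period of a simple factor $F$ under $\sigma\theta$. Pick $x\neq0$ in $F$ fixed by $(\sigma\theta)^m$; this exists because automorphisms of compact simple Lie algebras have nonzero fixed vectors. Then $v=\sum_{k<m}(\sigma\theta)^kx$ satisfies $0\neq v+\theta v\in\mathfrak h\cap\mathfrak d$, since $v$ and $\theta v$ lie in disjoint sets of simple factors. This contradicts $\mathfrak h\cap\mathfrak d=0$.
\end{itemize}

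The paper's Step~5 does not supply this step as written either. There $\ev|_{\Phi(G)}$ is an isomorphism onto $(G,\circ)$, not onto $(G,\cdot)$. In the example above, choosing $I=\g\oplus0$ and $I_{i_0}$ its second simple factor gives $J=\{(s^{-1},s):s\in L\}$, which is not even a subgroup of $(G,\cdot)$.
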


In order to prove Theorem~\ref{thm:main-compact-simple}, we begin with a few
auxiliary results.

Let $(G,\cdot,\circ)$ be a Lie skew brace. For $x,y\in G$, the \emph{star
product} of $x$ and $y$ is defined by
\[
x*y=\lambda_x(y)\cdot y^{-1}.
\]
If $A,B\subseteq G$, we define
\[
A*B=\langle a*b \mid a\in A,\ b\in B\rangle_{\cdot},
\]
where $\langle S\rangle_{\cdot}$ denotes the subgroup generated by $S$ in the
group $(G,\cdot)$.

The following fact is well known for skew braces (see
\cite[Lemma~1.9]{CedoSmoktunowiczVendramin2018}), and its proof is purely
algebraic, so it extends verbatim to Lie skew braces.

\begin{lemma}\label{lem:char-star-ideal}
Let $(G,\cdot,\circ)$ be a connected Lie skew brace and let $I$ be a connected
closed characteristic subgroup of $(G,\cdot)$. If $I*G\leq I$, then
$I\trianglelefteq (G,\cdot,\circ)$.
\end{lemma}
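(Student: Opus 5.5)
We need to prove Lemma~\ref{lem:char-star-ideal}: if $I$ is a connected closed characteristic subgroup of $(G,\cdot)$ with $I*G\le I$, then $I$ is an ideal. We check the three conditions of Definition~\ref{def:ideal-LSB} one at a time, following the algebraic argument for skew braces.

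\textbf{Normality in $(G,\cdot)$ and $\lambda$-invariance.} Since $I$ is characteristic in $(G,\cdot)$, it is invariant under every automorphism of $(G,\cdot)$. In particular it is invariant under inner automorphisms, so $I\trianglelefteq(G,\cdot)$. Each $\lambda_a$ lies in $\Aut(G,\cdot)$, so $\lambda_a(I)\subseteq I$ for all $a$. Applying the same inclusion to $\lambda_{\overline a}=\lambda_a^{-1}$ gives equality, $\lambda_a(I)=I$.

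\textbf{$I$ is a subgroup of $(G,\circ)$.} For $x,y\in I$ we have $x\circ y=x\cdot\lambda_x(y)\in I$. The $\circ$-inverse is $\overline x=\lambda_x^{-1}(x^{-1})$, which lies in $I$ by $\lambda$-invariance. Being closed and connected in the manifold topology, $I$ is then a connected closed Lie subgroup of $(G,\circ)$ as well.

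\textbf{$I$ is normal in $(G,\circ)$.} This is the key step and uses the hypothesis $I*G\le I$. Take $a\in G$ and $x\in I$, and compute
\[
a\circ x\circ\overline a = a\cdot\lambda_a\bigl(x\cdot\lambda_x(\overline a)\bigr)
= a\cdot\lambda_a(x)\cdot\lambda_{a\circ x}(\overline a).
\]
Next write $\lambda_x(\overline a)=(x*\overline a)\cdot\overline a$, where $x*\overline a\in I$ by hypothesis. Applying $\lambda_a$ gives
\[
\lambda_{a\circ x}(\overline a)=\lambda_a(x*\overline a)\cdot\lambda_a(\overline a).
\]
Moreover $a\cdot\lambda_a(\overline a)=a\circ\overline a=e$, so $\lambda_a(\overline a)=a^{-1}$. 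Substituting,
\[
a\circ x\circ\overline a=a\cdot\bigl[\lambda_a(x)\cdot\lambda_a(x*\overline a)\bigr]\cdot a^{-1}.
\]
The bracketed element lies in $I$ by $\lambda$-invariance, and its $\cdot$-conjugate by $a$ lies in $I$ by $\cdot$-normality. Hence $I\trianglelefteq(G,\circ)$.

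The only point needing care is this final computation. Everything else is immediate, and no topology enters beyond the fact that $I$ is already assumed connected and closed.
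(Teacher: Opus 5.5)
Your proof is correct and is the standard purely algebraic argument the paper invokes by citation (Cedó--Smoktunowicz--Vendramin, Lemma~1.9) without writing it out. Characteristicity gives $\cdot$-normality and $\lambda$-stability, closure under $\circ$ and $\circ$-inverses follows, and the identity $a\circ x\circ\overline a=a\cdot\lambda_a\bigl(x\cdot(x*\overline a)\bigr)\cdot a^{-1}$ gives $\circ$-normality. You also correctly note that closedness and connectedness carry over to $(G,\circ)$ via the closed subgroup theorem.
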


The next lemma provides a useful criterion for producing ideals from
characteristic subgroups of the additive group.

\begin{lemma}\label{FundLemma}
Let $(G,\cdot,\circ)$ be a Lie skew brace and let $H$ be a connected closed
characteristic subgroup of $(G,\cdot)$. Then the map
\[
\lambda_H:(G,\circ)\longrightarrow \Aut\bigl((G,\cdot)/H\bigr),\qquad
g\longmapsto \bigl(xH\mapsto \lambda_g(x)H\bigr),
\]
is a well-defined Lie group homomorphism. Moreover,
\[
H\leq \ker(\lambda_H)
\iff
H\trianglelefteq (G,\cdot,\circ).
\]
\end{lemma}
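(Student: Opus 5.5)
We have to prove Lemma~\ref{FundLemma}. There are two parts. First we show that $\lambda_H$ is a well-defined Lie group homomorphism. Then we prove the equivalence in both directions.

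\emph{Well-definedness and smoothness.} Since $H$ is characteristic in $(G,\cdot)$ and each $\lambda_g\in\Aut(G,\cdot)$, we have $\lambda_g(H)=H$. Hence $\lambda_g$ descends to an automorphism $\bar\lambda_g$ of $(G,\cdot)/H$, with $\bar\lambda_g(xH)=\lambda_g(x)H$. This quotient group is a Lie group because $H$ is closed and normal, being characteristic. The homomorphism property $\bar\lambda_{g\circ h}=\bar\lambda_g\bar\lambda_h$ follows directly from $\lambda_{g\circ h}=\lambda_g\lambda_h$.

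For smoothness we argue as follows. The map $G\times G/H\to G/H$, $(g,xH)\mapsto\lambda_g(x)H$, is smooth, since $(g,x)\mapsto g^{-1}\cdot(g\circ x)$ is smooth and the quotient map is a submersion. A continuous homomorphism into the Lie group $\Aut((G,\cdot)/H)$ is automatically smooth. Continuity for the compact-open topology follows from joint continuity of the action. Alternatively, one can argue via the differential representation on $\Lie(G)/\Lie(H)$ when $G/H$ is connected.

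\emph{($\Leftarrow$).} Assume $H\le\ker\lambda_H$, i.e. $\lambda_h(x)\in xH$ for all $h\in H$ and $x\in G$. We check the three conditions of Definition~\ref{def:ideal-LSB}.
\begin{itemize}
\item Condition (i) holds because characteristic subgroups are normal.
\item Condition (iii) is the observation above.
\item For (ii), note first that $H$ is a $\circ$-subgroup. Indeed, $h\circ k=h\cdot\lambda_h(k)\in H$, and $\bar h=\lambda_h^{-1}(h^{-1})\in H$.
\end{itemize}
For $\circ$-normality, take $g\in G$ and $h\in H$. We have
\[
g\circ h\circ\bar g=g\cdot\lambda_g(h\circ\bar g)=g\cdot\lambda_g\bigl(h\cdot\lambda_h(\bar g)\bigr).
\]
By hypothesis $\lambda_h(\bar g)=\bar g\cdot h'$ for some $h'\in H$. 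Then
\[
g\cdot\lambda_g(h)\cdot\lambda_g(\bar g)\cdot\lambda_g(h')\in g\cdot H\cdot\lambda_g(\bar g)\cdot H,
\]
using $\lambda_g(H)=H$. Since $g\circ\bar g=e$, we have $g\cdot\lambda_g(\bar g)=e$, so $\lambda_g(\bar g)=g^{-1}$. By $\cdot$-normality of $H$, the expression therefore lies in $gHg^{-1}H=H$.

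Finally, $H$ is connected and closed as a subset, and the two group structures share the same topology. So $H$ is a connected closed subgroup of $(G,\circ)$, hence a Lie subgroup.

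\emph{($\Rightarrow$).} Assume $H$ is an ideal, and let $h\in H$, $x\in G$. Since $H$ is $\circ$-normal, we get $h\circ x\in x\circ H$. Write $h\circ x=x\circ k=x\cdot\lambda_x(k)$ with $\lambda_x(k)\in H$. On the other hand $h\circ x=h\cdot\lambda_h(x)$. Comparing the two expressions,
\[
\lambda_h(x)=h^{-1}\cdot x\cdot\lambda_x(k)\in h^{-1}xH=xH,
\]
where the last equality uses $\cdot$-normality. Hence $\bar\lambda_h=\id$.

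The only genuinely delicate point is the smoothness and Lie-theoretic setup of $\lambda_H$. The algebraic equivalence is the standard skew-brace computation, and I expect it to go through as above.
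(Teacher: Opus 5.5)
Your proof is correct and follows the paper's route: $H$ being characteristic gives well-definedness, $\lambda$ gives the homomorphism property, and the equivalence rests on the condition $\lambda_h(x)\in xH$, i.e.\ $H*G\subseteq H$. The differences are minor: you check the ideal axioms (in particular $\circ$-normality) directly instead of citing Lemma~\ref{lem:char-star-ideal}, you prove the step ``ideal $\Rightarrow$ $H*G\subseteq H$'' that the paper only asserts, and you supply a smoothness argument the paper omits; your labels ($\Leftarrow$) and ($\Rightarrow$) are swapped relative to the displayed equivalence, which is harmless.
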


\begin{proof}
Since $H$ is closed in $(G,\cdot)$, the quotient $(G,\cdot)/H$ is a Lie group.

We first show that $\lambda_H$ is well defined. If $xH=yH$, then
$xy^{-1}\in H$. Since $H$ is characteristic in $(G,\cdot)$ and
$\lambda_g\in\Aut(G,\cdot)$ for every $g\in G$, we get
$\lambda_g(xy^{-1})\in H$, hence
\[
\lambda_g(x)H=\lambda_g(y)H.
\]
Therefore $\lambda_H$ is well defined.

Since
\[
\lambda:(G,\circ)\to\Aut(G,\cdot)
\]
is a Lie group homomorphism, it follows immediately that $\lambda_H$ is also a
Lie group homomorphism.

Suppose now that $H\leq \ker(\lambda_H)$. Let $h\in H$ and $g\in G$. Then
\[
\lambda_h(g)H=gH,
\]
so $\lambda_h(g)\cdot g^{-1}\in H$. Therefore
\[
h*g=\lambda_h(g)\cdot g^{-1}\in H,
\]
and hence $H*G\subseteq H$. By Lemma~\ref{lem:char-star-ideal}, this implies
\[
H\trianglelefteq (G,\cdot,\circ).
\]

Conversely, assume that
\[
H\trianglelefteq (G,\cdot,\circ).
\]
Then for every $h\in H$ and $g\in G$ one has
\[
h*g=\lambda_h(g)\cdot g^{-1}\in H,
\]
so $\lambda_h(g)H=gH$. Thus $h\in\ker(\lambda_H)$, proving that
$H\leq \ker(\lambda_H)$.
\end{proof}

\begin{lemma}\label{lem:compact-abelian-trivial}
Let $(G,\cdot,\circ)$ be a connected compact Lie skew brace. Assume that
$(G,\cdot)$ is abelian. Then
\[
(G,\cdot)\cong \mathbb{T}^n
\]
for some $n\ge 0$, the lambda-action is trivial, and hence
\[
a\circ b=a\cdot b,\  \text{for all }a,b\in G.
\]
In particular, $(G,\cdot,\circ)$ is the trivial Lie skew brace on a torus.

Moreover, if $(G,\cdot,\circ)$ is simple, then necessarily
\[
(G,\cdot)=(G,\circ)=S^1.
\]
\end{lemma}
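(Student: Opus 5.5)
A compact connected abelian Lie group is a torus, so $(G,\cdot)\cong\mathbb{T}^n$. The plan is then to show that the lambda-action is trivial by a discreteness argument. For $G=\mathbb{T}^n=\R^n/\mathbb{Z}^n$, every continuous automorphism lifts to a linear map of $\R^n$ preserving $\mathbb{Z}^n$. Hence the natural map $\Aut(\mathbb{T}^n)\to \GL_n(\mathbb{Z})$ is injective, since an automorphism of a torus is determined by its differential at the identity. Its image is discrete. In particular, $\Aut(G,\cdot)$ is a discrete group in the compact-open topology; equivalently, its Lie algebra $\Der(\R^n,0)$ restricted to lattice-preserving maps is zero at the group level.

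Since $\lambda:(G,\circ)\to\Aut(G,\cdot)$ is a continuous (smooth) homomorphism and $(G,\circ)$ is connected, its image is a connected subset of a discrete group. The image is therefore $\{\id\}$, so $\lambda_a=\id$ for every $a$. Then
\[
a\circ b=a\cdot\lambda_a(b)=a\cdot b
\]
for all $a,b$, and the brace is trivial. The only point needing care is the discreteness of $\Aut(\mathbb{T}^n)$ in the topology the paper uses. I would argue it concretely: a continuous automorphism sufficiently close to the identity in the compact-open topology has a lift that, evaluated on the standard basis, lies in $\mathbb{Z}^n$ and is close to the basis vectors. It therefore equals them, so the automorphism is the identity. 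I expect this to be the main (though mild) obstacle.

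For the final assertion, assume the brace is simple. Since $\circ=\cdot$ and $\lambda$ is trivial, ideals are exactly the connected closed subgroups of $\mathbb{T}^n$. All of these are normal, and $\lambda$ fixes every subgroup. For $n\ge 2$, the subtorus $S^1\times\{e\}^{n-1}$ is a nontrivial proper ideal, contradicting simplicity. For $n=0$, $G$ is a point; this is excluded, implicitly taking simple to mean nontrivial in accordance with the main theorem. Hence $n=1$ and $(G,\cdot)=(G,\circ)=S^1$. Conversely, $S^1$ has no nontrivial proper connected closed subgroups, so it is indeed simple as a Lie skew brace.
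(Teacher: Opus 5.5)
Your proposal is correct and follows the paper's proof essentially step by step. Both identify $(G,\cdot)$ with $\mathbb{T}^n$, use discreteness of $\Aut(\mathbb{T}^n)\cong\GL_n(\mathbb{Z})$ and connectedness of $(G,\circ)$ to force $\lambda$ to be trivial, and exclude $n\ge 2$ via a proper subtorus. Your explicit check of discreteness in the compact-open topology, and your remark that the degenerate case $n=0$ must be ruled out by convention (the paper's definition of simplicity does not exclude it and its proof passes over it), are small refinements that the paper leaves implicit.
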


\begin{proof}
Since $(G,\cdot)$ is a connected compact abelian Lie group, there exists
$n\ge 0$ such that
\[
(G,\cdot)\cong \mathbb{T}^n.
\]

Consider the lambda-action
\[
\lambda:(G,\circ)\longrightarrow \Aut(G,\cdot).
\]
Since $(G,\cdot)\cong \mathbb{T}^n$, one has
\[
\Aut(G,\cdot)\cong \Aut(\mathbb{T}^n)\cong \GL_n(\mathbb{Z}),
\]
and $\GL_n(\mathbb{Z})$ is discrete. On the other hand, $(G,\circ)$ is
connected. Therefore the image $\lambda(G)$ is a connected subgroup of a
discrete group, hence it is trivial. Thus
\[
\lambda_a=\id_G
\qquad\text{for all }a\in G.
\]

Using the identity
\[
a\circ b=a\cdot \lambda_a(b),
\]
we obtain
\[
a\circ b=a\cdot b
\qquad\text{for all }a,b\in G.
\]
Hence the Lie skew brace is trivial, and in particular
\[
(G,\circ)=(G,\cdot)\cong \mathbb{T}^n.
\]

Assume now that $(G,\cdot,\circ)$ is simple. Since the brace is trivial, its
ideals are exactly the connected closed normal Lie subgroups of
$(G,\cdot)\cong \mathbb{T}^n$. But every connected closed subgroup of a torus
is again a torus, and if $n\ge 2$ there exist proper nontrivial connected
subtori. Therefore simplicity forces $n=1$. Hence
\[
(G,\cdot)=(G,\circ)=\mathbb{T}^1=S^1.
\]
\end{proof}

\begin{lemma}\label{lem:universal-cover-compact}
Let $(G,\cdot,\circ)$ be a Lie skew brace, and let
$(\widetilde G,\widetilde\cdot,\widetilde\circ)$
be its universal covering Lie skew brace. Assume that $\widetilde G$ is
compact. If $(G,\cdot,\circ)$ is simple, then
$(\widetilde G,\widetilde\cdot,\widetilde\circ)$
is simple.
\end{lemma}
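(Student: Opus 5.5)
Since $\widetilde G$ is compact, the covering $\pi:\widetilde G\to G$ is a finite covering with finite kernel $Z:=\ker\pi$. Here $Z$ is a discrete subgroup, central in the connected group $(\widetilde G,\widetilde\cdot)$ and in $(\widetilde G,\widetilde\circ)$; for both laws it is the same subset $\pi^{-1}(e)$. The plan is to push ideals of $\widetilde G$ down to $G$ via $\pi$. The key facts are that $\pi$ is a morphism for both group laws (by \cite[Lemma~3.2]{DameleLoi2026}) and that it intertwines the lambda-actions:
\[
\pi\circ\widetilde\lambda_a=\lambda_{\pi(a)}\circ\pi .
\]
This follows directly from $\widetilde\lambda_a(b)=a^{-1}\widetilde\cdot(a\widetilde\circ b)$ and the homomorphism property of $\pi$.

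Let $J\trianglelefteq(\widetilde G,\widetilde\cdot,\widetilde\circ)$ be an ideal and put $I:=\pi(J)$. I will check that $I$ is an ideal of $(G,\cdot,\circ)$.
\begin{enumerate}[label=\textup{(\arabic*)}]
\item $I$ is connected, being the continuous image of a connected set.
\item $I$ is closed, being the continuous image of a compact set ($J$ is closed in the compact $\widetilde G$).
\item $I$ is a subgroup for both laws.
\item $I$ is normal in $(G,\cdot)$ and in $(G,\circ)$, because $\pi$ is surjective and a homomorphism: every $g\in G$ is $\pi(\tilde g)$, and conjugation of $J$ by $\tilde g$ pushes forward.
\item $\lambda_{\pi(a)}(I)=\pi(\widetilde\lambda_a(J))=\pi(J)=I$ by the intertwining relation.
\end{enumerate}
A closed subgroup of a Lie group is a Lie subgroup, so $I$ is an ideal. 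By simplicity of $G$, either $I=\{e\}$ or $I=G$.

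\textbf{Case $I=\{e\}$.} Then $J\subseteq Z$, which is finite. Since $J$ is connected and contains $e$, we get $J=\{e\}$.

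\textbf{Case $I=G$.} Then $J\cdot Z=\pi^{-1}(G)=\widetilde G$ (using the $\widetilde\cdot$ law). Because $Z$ is finite, $\widetilde G$ is a finite union of the closed cosets $J\widetilde\cdot z$. These cosets either coincide or are disjoint. Hence $J$ is open as well as closed, and connectedness of $\widetilde G$ gives $J=\widetilde G$. Alternatively, one can compare dimensions: $\dim J=\dim\widetilde G$ and $J$ is a closed subgroup, so $J$ is open in the connected group $\widetilde G$.

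So the only ideals of $\widetilde G$ are $\{e\}$ and $\widetilde G$, which is the claimed simplicity. The step needing the most care is the first one: checking that $\pi(J)$ meets every clause of Definition~\ref{def:ideal-LSB}, especially the $\lambda$-invariance via the intertwining relation. Compactness of $\widetilde G$ is used exactly twice: to make $\pi(J)$ closed, and to make $Z$ finite.
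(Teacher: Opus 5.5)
Your proof is correct and follows essentially the same route as the paper: push an ideal $J$ down to $\pi(J)$, which is closed by compactness and is an ideal. If $\pi(J)=\{e\}$, the ideal is trivial because it is connected and lies in the discrete kernel. The only variation is in the case $\pi(J)=G$. There you show $J$ is open in $\widetilde G$ because $\widetilde G = J\,\widetilde\cdot\, Z$ is a finite union of closed cosets. The paper instead compares Lie algebras via the isomorphism $d\pi_e$. You also verify $\lambda$-stability of $\pi(J)$ explicitly through $\pi\circ\widetilde\lambda_a=\lambda_{\pi(a)}\circ\pi$, which the paper leaves implicit.
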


\begin{proof}
Let
$\pi:\widetilde G\to G$
be the universal covering map. By \cite[Lemma~3.2]{DameleLoi2026}, $\pi$ is a
surjective homomorphism of Lie skew braces.
Let
$N\trianglelefteq (\widetilde G,\widetilde\cdot,\widetilde\circ)$
be an ideal. Since $N$ is closed in the compact manifold $\widetilde G$, it is
compact. Therefore $\pi(N)$ is compact, hence closed in $G$. Since $\pi$ is a
surjective homomorphism of Lie skew braces, the image $\pi(N)$ is an ideal of
$(G,\cdot,\circ)$.

Since $(G,\cdot,\circ)$ is simple, either
\[
\pi(N)=\{e\}
\qquad\text{or}\qquad
\pi(N)=G.
\]

If $\pi(N)=\{e\}$, then
\[
N\subseteq \ker(\pi).
\]
Since $\ker(\pi)$ is discrete and $N$ is connected, it follows that
$N=\{e\}$.

If $\pi(N)=G$, then
\[
d\pi_e\bigl(\Lie(N,\widetilde\cdot)\bigr)=\Lie(G,\cdot).
\]
Since
\[
d\pi_e:\Lie(\widetilde G,\widetilde\cdot)\to \Lie(G,\cdot)
\]
is an isomorphism, it follows that
\[
\Lie(N,\widetilde\cdot)=\Lie(\widetilde G,\widetilde\cdot).
\]
As $(N,\widetilde\cdot)$ and $(\widetilde G,\widetilde\cdot)$ are connected Lie
subgroups of $(\widetilde G,\widetilde\cdot)$ with the same Lie algebra, we
conclude that
\[
N=\widetilde G.
\]
Therefore $(\widetilde G,\widetilde\cdot,\widetilde\circ)$ is simple.
\end{proof}
\begin{proof}[Proof of Theorem~\ref{thm:main-compact-simple}]
We distinguish two cases.

\medskip
\noindent
\textbf{Case 1: \((G,\cdot)\) is not semisimple.}
Assume first that \((G,\cdot)\) is not semisimple. Since \((G,\cdot)\) is
compact and connected, it is reductive. Hence a compact connected
non-semisimple Lie group cannot be perfect, so its commutator subgroup
$H:=[G,G]_{\cdot}$
is a proper connected closed subgroup of \((G,\cdot)\).
Therefore the quotient \((G,\cdot)/H\) is a nontrivial compact connected
abelian Lie group, and so
\[
(G,\cdot)/H\cong \mathbb{T}^{n}
\qquad\text{for some }n\ge 1.
\]
By Lemma~\ref{FundLemma}, we obtain a continuous homomorphism
\[
\lambda_H:(G,\circ)\longrightarrow \Aut\bigl((G,\cdot)/H\bigr)
\cong \Aut(\mathbb{T}^{n})
\cong \GL_n(\mathbb{Z}).
\]
Since \((G,\circ)\) is connected and \(\GL_n(\mathbb{Z})\) is discrete, the
image of \(\lambda_H\) is trivial. Hence
\[
\ker(\lambda_H)=(G,\circ),
\]
and Lemma~\ref{FundLemma} yields
\[
H\trianglelefteq (G,\cdot,\circ).
\]
Since \((G,\cdot,\circ)\) is simple, it follows that
\[
H=\{e\}.
\]
Thus \((G,\cdot)\) is abelian. Since \(G\) is compact and connected,
Lemma~\ref{lem:compact-abelian-trivial} applies. Hence the Lie skew brace is
trivial and
\[
(G,\cdot)=(G,\circ)\cong \mathbb{T}^{n}
\]
for some \(n\ge 0\). Simplicity then forces \(n=1\). Therefore
\[
(G,\cdot)=(G,\circ)=S^{1},
\]
which proves \textup{(a)}.

\medskip
\noindent
\textbf{Case 2: \((G,\cdot)\) is semisimple.}

Assume now that \((G,\cdot)\) is semisimple. We proceed in several steps.

\medskip
\noindent
\textbf{Step 1: Reduction to the simply connected case.}
Let \(\widetilde G\) be the universal covering manifold of \(G\). By
\cite[Lemma~3.2]{DameleLoi2026}, \(\widetilde G\) carries a lifted Lie skew
brace structure
$(\widetilde G,\widetilde\cdot,\widetilde\circ)$
such that the covering map
$\pi:\widetilde G\to G$
is a morphism of Lie skew braces. Since \((G,\cdot)\) is compact and
semisimple, its fundamental group is finite; hence \(\pi\) is a finite
covering and \(\widetilde G\) is compact. By
Lemma~\ref{lem:universal-cover-compact}, simplicity passes to the universal
cover. Replacing \(G\) by \(\widetilde G\), we may therefore assume that both
\((G,\cdot)\) and \((G,\circ)\) are simply connected.

\medskip
\noindent
\textbf{Step 2: The lambda-action is inner.}
Since \((G,\circ)\) is connected, the subgroup \(\lambda(G)\leq \Aut(G,\cdot)\)
is connected. For a compact semisimple Lie group one has
\[
\Aut(G,\cdot)^0=\Inn(G,\cdot),
\]
because the outer automorphism group is finite. Therefore
\[
\lambda(G)\subseteq \Inn(G,\cdot).
\]

Let \(Z:=Z(G,\cdot)\). Since \((G,\cdot)\) is compact, semisimple and simply
connected, \(Z\) is finite and the natural quotient map
\[
q:(G,\cdot)\longrightarrow (G,\cdot)/Z\cong \Inn(G,\cdot)
\]
is a covering homomorphism of Lie groups. As \((G,\circ)\) is simply connected
and
\[
\lambda:(G,\circ)\to \Inn(G,\cdot)
\]
is a Lie group homomorphism, it lifts uniquely to a Lie group homomorphism
\[
\gamma:(G,\circ)\to (G,\cdot)
\]
such that
\[
q\circ \gamma=\lambda.
\]
Equivalently,
\begin{equation}\label{eq:lambda_is_conj_simple_sec_final}
\lambda_a=\Inn_{\gamma(a)}
\qquad (a\in G),
\end{equation}
that is,
\[
\lambda_a(x)=\gamma(a)\cdot x\cdot \gamma(a)^{-1}
\qquad \forall\, a,x\in G.
\]

\medskip
\noindent
\textbf{Step 3: A decomposition of \(\mathfrak f=\Lie((G,\cdot)\times(G,\cdot))\).}

Set
\[
K:=(G,\cdot)\times (G,\cdot),
\qquad
\Delta:=\{(g,g)\mid g\in G\},
\]
and let
\[
\mathfrak f:=\Lie(K),
\qquad
\mathfrak d:=\Lie(\Delta).
\]

Define
\begin{equation}\label{eq:Phi_def_simple_sec_final}
\Phi:(G,\circ)\longrightarrow K,
\qquad
\Phi(a):=\bigl(a\cdot \gamma(a),\,\gamma(a)\bigr).
\end{equation}
We first show that \(\Phi\) is a Lie group homomorphism. Let \(a,b\in G\). Since
\(\gamma:(G,\circ)\to (G,\cdot)\) is a Lie group homomorphism, one has
\[
\gamma(a\circ b)=\gamma(a)\gamma(b).
\]
Moreover, by \eqref{eq:lambda_is_conj_simple_sec_final},
\[
\lambda_a(b)=\gamma(a)b\gamma(a)^{-1}.
\]
Using
\[
a\circ b=a\cdot \lambda_a(b),
\]
we obtain
\[
a\circ b=a\gamma(a)b\gamma(a)^{-1}.
\]
Therefore
\begin{align*}
\Phi(a\circ b)
&=
\bigl((a\circ b)\gamma(a\circ b),\,\gamma(a\circ b)\bigr)\\
&=
\bigl(a\gamma(a)b\gamma(a)^{-1}\gamma(a)\gamma(b),\,\gamma(a)\gamma(b)\bigr)\\
&=
\bigl(a\gamma(a)b\gamma(b),\,\gamma(a)\gamma(b)\bigr)\\
&=
\bigl(a\gamma(a),\gamma(a)\bigr)\bigl(b\gamma(b),\gamma(b)\bigr)\\
&=
\Phi(a)\Phi(b).
\end{align*}
Thus \(\Phi\) is a homomorphism.
It is also injective. Indeed, if \(\Phi(a)=(e,e)\), then \(\gamma(a)=e\), and
hence \(a=e\). Thus \(\ker\Phi=\{e\}\). Since \((G,\circ)\) is compact and \(K\)
is Hausdorff, \(\Phi(G)\) is a closed Lie subgroup of \(K\), and
\[
\dim \Phi(G)=\dim G.
\]
Let
\[
\mathfrak g_\Phi:=\Lie(\Phi(G)).
\]

Now consider the multiplication map
\[
F:\Phi(G)\times \Delta\longrightarrow K,
\qquad
(g,\delta)\longmapsto g\delta.
\]
We claim that \(F\) is a diffeomorphism.
To prove surjectivity, let \((g_1,g_2)\in K\). Set
\[
a:=g_1g_2^{-1},
\qquad
u:=\gamma(a)^{-1}g_2.
\]
Then
\[
\Phi(a)(u,u)
=
(a\gamma(a),\gamma(a))(u,u)
=
(a\gamma(a)u,\gamma(a)u)
=
(g_1,g_2).
\]
Hence \(F\) is surjective.
We next show that
\[
\Phi(G)\cap \Delta=\{(e,e)\}.
\]
Indeed, if \(\Phi(a)\in\Delta\), then
\[
(a\gamma(a),\gamma(a))=(u,u)
\]
for some \(u\in G\). Hence \(a\gamma(a)=\gamma(a)\), so \(a=e\), and therefore
\(\Phi(a)=(e,e)\).

To prove injectivity, suppose
\[
g_1\delta_1=g_2\delta_2
\qquad
(g_1,g_2\in \Phi(G),\ \delta_1,\delta_2\in \Delta).
\]
Then
\[
g_2^{-1}g_1=\delta_2\delta_1^{-1}\in \Phi(G)\cap \Delta=\{(e,e)\},
\]
so \(g_1=g_2\) and \(\delta_1=\delta_2\). Thus \(F\) is injective.
Finally, we show that \(F\) is a local diffeomorphism. Since
\[
\dim(\Phi(G)\times \Delta)=\dim G+\dim G=\dim K,
\]
it is enough to prove that the differential at the identity is an isomorphism.
One has
\[
F_{*(e,e)}:\mathfrak g_\Phi\oplus \mathfrak d\to \mathfrak f,
\qquad
F_{*(e,e)}(X,Y)=X+Y.
\]
Because \(\Phi(G)\cap \Delta=\{(e,e)\}\), it follows that
\[
\mathfrak g_\Phi\cap \mathfrak d=\{0\},
\]
so \(F_{*(e,e)}\) is injective, hence bijective. By left translation, \(F\) is
a local diffeomorphism everywhere. Since it is also bijective, it is a global
diffeomorphism. Passing to Lie algebras, we obtain
\begin{equation}\label{eq:direct_sum_simple_sec_final}
\mathfrak f=\mathfrak g_\Phi\oplus \mathfrak d.
\end{equation}

\medskip
\noindent
\textbf{Step 4: Application of Ozeki's proposition.}
We apply \cite[Proposition~1]{Ozeki1977} to the compact Lie algebra
\(\mathfrak f\) together with the decomposition
\eqref{eq:direct_sum_simple_sec_final}. We obtain a decomposition
\[
\mathfrak f=I\oplus I'
\]
into ideals such that the projection
\[
p:\mathfrak f\to I
\]
restricts to a Lie algebra isomorphism
\begin{equation}\label{eq:p_iso_simple_sec_final}
p|_{\mathfrak g_\Phi}:\mathfrak g_\Phi\xrightarrow{\sim} I.
\end{equation}

\medskip
\noindent
\textbf{Step 5: The additive group \((G,\cdot)\) is simple.}
Assume, toward a contradiction, that \((G,\cdot)\) is not simple. Since it is
compact, semisimple and simply connected, there exists a decomposition
\[
(G,\cdot)\cong S_1\times \cdots \times S_r,
\qquad r\ge 2,
\]
where each \(S_i\) is a simply connected compact simple Lie group. Hence
\[
\mathfrak g_{\cdot}:=\Lie(G,\cdot)
=
\mathfrak s_1\oplus\cdots\oplus \mathfrak s_r,
\]
and
\[
\mathfrak f=\Lie(K)
=
\mathfrak g_{\cdot}\oplus \mathfrak g_{\cdot}
=
\bigoplus_{i=1}^r (\mathfrak s_i\oplus \mathfrak s_i).
\]
The simple ideals of \(\mathfrak f\) are precisely
\[
\mathfrak s_i\oplus 0
\qquad\text{and}\qquad
0\oplus \mathfrak s_i
\qquad (i=1,\dots,r).
\]

By Step~4, there exists an ideal \(I\trianglelefteq \mathfrak f\) such that
\[
p|_{\mathfrak g_\Phi}:\mathfrak g_\Phi\xrightarrow{\sim} I
\]
is a Lie algebra isomorphism. In particular,
\[
\dim I=\dim \mathfrak g_\Phi=\dim G=\dim \mathfrak g_{\cdot}
=\sum_{i=1}^r \dim \mathfrak s_i.
\]
Since \(I\neq 0\), it contains at least one simple ideal of \(\mathfrak f\);
fix one and denote it by
$I_{i_0}\subseteq I$.
Because \(r\ge 2\), one has
\[
\dim I>\dim I_{i_0},
\]
so \(I_{i_0}\) is a nonzero proper ideal of \(I\).

Let
\[
\mathfrak a:=(p|_{\mathfrak g_\Phi})^{-1}(I_{i_0}).
\]
Then \(\mathfrak a\) is a nonzero proper ideal of \(\mathfrak g_\Phi\).
Let \(A\trianglelefteq \Phi(G)\) be the connected closed normal subgroup with
Lie algebra \(\mathfrak a\), and set
\[
J:=\Phi^{-1}(A).
\]
Then \(J\) is a connected closed normal subgroup of \((G,\circ)\), and
\[
J\neq \{e\},
\qquad
J\neq G.
\]

Now consider the evaluation map
\[
\ev:K\longrightarrow (G,\cdot),
\qquad
\ev(u,v)=u\cdot v^{-1}.
\]
For every \(a\in G\),
\[
\ev(\Phi(a))=a.
\]
Hence the restriction
\[
\ev|_{\Phi(G)}:\Phi(G)\longrightarrow (G,\cdot)
\]
is a Lie group isomorphism, with inverse \(a\mapsto \Phi(a)\). Therefore
\[
(\ev|_{\Phi(G)})_{*(e,e)}:\mathfrak g_\Phi\longrightarrow \mathfrak g_{\cdot}
\]
is an isomorphism of Lie algebras.
Since \(\mathfrak a\) is a nonzero proper ideal of \(\mathfrak g_\Phi\), its
image
\[
(\ev|_{\Phi(G)})_{*(e,e)}(\mathfrak a)
\]
is a nonzero proper ideal of
\[
\mathfrak g_{\cdot}=\mathfrak s_1\oplus\cdots\oplus \mathfrak s_r.
\]
Hence there exists a nonempty proper subset
\[
\varnothing\neq \Lambda\subsetneq \{1,\dots,r\}
\]
such that
\[
(\ev|_{\Phi(G)})_{*(e,e)}(\mathfrak a)
=
\bigoplus_{i\in \Lambda}\mathfrak s_i.
\]
Let
\[
S_{\Lambda}:=\prod_{i\in \Lambda} S_i.
\]
Then \(S_{\Lambda}\) is a nontrivial proper connected closed normal subgroup of
\((G,\cdot)\), and
\[
\Lie(S_{\Lambda},\cdot)
=
\bigoplus_{i\in \Lambda}\mathfrak s_i
=
\Lie(J,\cdot).
\]
Since \(J\) and \(S_{\Lambda}\) are connected Lie subgroups of \((G,\cdot)\)
with the same Lie algebra, it follows that
\[
J=S_{\Lambda}.
\]

Thus \(J\) is a nontrivial proper connected closed normal subgroup of
\((G,\cdot)\), normal in \((G,\circ)\) by construction, and \(\lambda\)-stable
because, by \eqref{eq:lambda_is_conj_simple_sec_final}, each \(\lambda_a\) is
inner and therefore preserves every simple direct factor \(S_i\). Hence
$J\trianglelefteq (G,\cdot,\circ)$,
contradicting the simplicity of the Lie skew brace. This contradiction proves
that \((G,\cdot)\) is a simple Lie group.

\medskip
\noindent
\textbf{Step 6: Conclusion.}
Since \((G,\cdot)\) and \((G,\circ)\) are compact connected Lie groups on the
same underlying smooth manifold, they are homotopy equivalent. Hence
\[
\pi_n(G,\cdot)\cong \pi_n(G,\circ)
\qquad\text{for all }n\ge 1.
\]
By \cite[Thm.~2]{Boekholt1998}, compact connected Lie groups with isomorphic
homotopy groups in every degree are locally isomorphic. Therefore
\[
\Lie(G,\circ)\cong \Lie(G,\cdot).
\]
Since \((G,\cdot)\) is simple, its Lie algebra is simple. Hence
\(\Lie(G,\circ)\) is simple as well, and therefore \((G,\circ)\) is a simple
Lie group.

We may now apply Theorem~\ref{thm:simple-group-rigidity}. It follows that
\[
\circ=\cdot
\qquad\text{or}\qquad
\circ=\cdot^{\mathrm{opp}}.
\]
Thus \((G,\cdot,\circ)\) is either trivial or almost trivial. This proves
\textup{(b)} and completes the proof.
\end{proof}

\begin{remark}\rm
By Step~6, the compact connected Lie groups \((G,\cdot)\) and \((G,\circ)\)
are locally isomorphic. In particular, their universal covering Lie groups are
isomorphic.
At first sight, one might therefore expect that compact connected Lie skew
braces arise from the simply connected case by descending along discrete
central quotients. However, such a descent is not automatic in the Lie skew
brace setting: the discrete subgroup has to be compatible with both group
structures so that the brace structure descends to the quotient.
Theorem~\ref{thm:main-compact-simple} shows that, in the simple compact
connected case, this possible global freedom does not lead to new examples.
\end{remark}

The next corollary shows that, outside the exceptional case of $S^1$, the
notion of simplicity for compact connected Lie skew braces coincides with
simplicity of either underlying Lie group.

\begin{corollary}\label{cor:compact-simple-equivalence}
Let $(G,\cdot,\circ)$ be a compact connected Lie skew brace, and assume that
$G\neq S^1$.
Then the following are equivalent:
\begin{enumerate}[label=\textup{(\roman*)}]
    \item $(G,\cdot,\circ)$ is simple;
    \item $(G,\cdot)$ is a simple Lie group;
    \item $(G,\circ)$ is a simple Lie group.
\end{enumerate}
Moreover, if these conditions hold, then $(G,\cdot,\circ)$ is either trivial or
almost trivial.
\end{corollary}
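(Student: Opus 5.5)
We argue by establishing the cycle of implications \textup{(i)}\(\Rightarrow\)\textup{(ii)}, \textup{(ii)}\(\Rightarrow\)\textup{(i)}, \textup{(iii)}\(\Rightarrow\)\textup{(i)}, and \textup{(i)}\(\Rightarrow\)\textup{(iii)}, using Theorem~\ref{thm:main-compact-simple} for the hard direction and Remark~\ref{rem:simple-group-implies-simple-LSB} for the easy ones. The final assertion (trivial or almost trivial) will then follow from Theorem~\ref{thm:simple-group-rigidity}, since \textup{(iii)} holds.

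The implications \textup{(ii)}\(\Rightarrow\)\textup{(i)} and \textup{(iii)}\(\Rightarrow\)\textup{(i)} are exactly Remark~\ref{rem:simple-group-implies-simple-LSB}. Every ideal of \((G,\cdot,\circ)\) is a connected closed normal Lie subgroup of both \((G,\cdot)\) and \((G,\circ)\). So if either group is simple, every ideal is \(\{e\}\) or \(G\).

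For \textup{(i)}\(\Rightarrow\)\textup{(ii)} and \textup{(i)}\(\Rightarrow\)\textup{(iii)}, assume the brace is simple and apply Theorem~\ref{thm:main-compact-simple}. In alternative \textup{(a)} the underlying manifold would be \(S^1\). The only point requiring care is to make the hypothesis \(G\neq S^1\) exclude this case. I read \(G\neq S^1\) as saying that neither group structure is isomorphic to the circle group; since \(G\) is compact and connected, this is equivalent to the manifold not being diffeomorphic to a circle, because a one-dimensional compact connected Lie group is \(S^1\). Hence alternative \textup{(b)} holds, and \((G,\cdot)\) and \((G,\circ)\) are both simple Lie groups. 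This gives \textup{(ii)} and \textup{(iii)} simultaneously, and \textup{(b)} also states directly that \(\circ=\cdot\) or \(\circ=\cdot^{\mathrm{opp}}\).

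For the \enquote{moreover} part, any of the three equivalent conditions implies \textup{(iii)}. Theorem~\ref{thm:simple-group-rigidity} then shows that the brace is trivial or almost trivial.

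I expect no genuine obstacle here; the result is a direct repackaging of Theorem~\ref{thm:main-compact-simple}. The only subtle points are interpreting the exclusion \(G\neq S^1\) correctly, and checking that \(S^1\) itself is not a counterexample to \textup{(ii)}\(\Leftrightarrow\)\textup{(i)}: \(S^1\) is simple as a brace but abelian, hence not a simple Lie group. This is precisely why the hypothesis is needed.
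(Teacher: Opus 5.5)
Your proposal is correct and follows the paper's proof: \textup{(ii)} or \textup{(iii)} implies \textup{(i)} by Remark~\ref{rem:simple-group-implies-simple-LSB}, and \textup{(i)} implies \textup{(ii)} and \textup{(iii)} by Theorem~\ref{thm:main-compact-simple}, with the hypothesis $G\neq S^1$ excluding alternative \textup{(a)}. The only cosmetic difference is that you get the final trivial/almost-trivial assertion from Theorem~\ref{thm:simple-group-rigidity} via \textup{(iii)}, whereas the paper reads it off directly from alternative \textup{(b)} of Theorem~\ref{thm:main-compact-simple}; both are immediate.
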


\begin{proof}
If $(G,\cdot,\circ)$ is simple, then Theorem~\ref{thm:main-compact-simple}
shows that both $(G,\cdot)$ and $(G,\circ)$ are simple Lie groups, since
$G\neq S^1$. Hence \textup{(i)} implies \textup{(ii)} and \textup{(iii)}.
Conversely, if either $(G,\cdot)$ or $(G,\circ)$ is a simple Lie group, then
Remark~\ref{rem:simple-group-implies-simple-LSB} implies that
$(G,\cdot,\circ)$ is simple. Therefore Theorem~\ref{thm:main-compact-simple}
applies and yields
\[
\circ=\cdot
\qquad\text{or}\qquad
\circ=\cdot^{\mathrm{opp}},
\]
and in particular both $(G,\cdot)$ and $(G,\circ)$ are simple. Thus
\textup{(ii)} and \textup{(iii)} imply \textup{(i)}.
This proves the equivalence. The final statement is again a consequence of
Theorem~\ref{thm:main-compact-simple}.
\end{proof}

\begin{remark}\label{rem:finite-simple-skew-braces-solvable}\rm
The finite skew brace setting exhibits a much wider range of phenomena than the
compact connected Lie setting. Indeed, there exist finite simple skew braces
\((B,\cdot,\circ)\) whose additive group \((B,\cdot)\) is solvable, and even
abelian. For instance, Bachiller \cite{Bachiller2016Counterexample} exhibited a
simple skew brace such that
\[
(B,\cdot)\cong \mathbb Z_{3}\times (\mathbb Z_{2})^{3},
\qquad
(B,\circ)\cong S_{4}.
\]
In particular, simple skew braces may have abelian additive group. This example
also shows that, in the finite setting, solvability of the additive group does
not imply solvability of the skew brace itself.

More recently, Byott \cite{Byott2024simple} constructed an infinite family of
simple skew braces of order \(p^{p}q\), where \(p\) and \(q\) are primes such
that
\[
q \mid \frac{p^{p}-1}{p-1}.
\]
These examples are not braces and do not arise from nonabelian simple groups.
In particular, they provide further evidence that finite simple skew braces may
display a much richer behaviour than compact connected simple Lie skew braces.
Accordingly, the rigidity results proved above for compact connected Lie skew
braces have no direct analogue in the finite simple skew brace setting.
\end{remark}

\subsection{Solvable LSBs}\label{subsec:solvability}

We briefly discuss solvability for Lie skew braces, following the
commutator-ideal approach introduced for skew braces in \cite{BBEJP}. Since the
notion of ideal has already been fixed in the Lie setting, we only define the
associated derived series.

Let \((G,\cdot,\circ)\) be a Lie skew brace, and let \(I,J\) be ideals of
\(G\). We denote by \([I,J]_B\) the closed connected ideal generated by
\[
\{\brd{x}{y}:x\in I,\ y\in J\},\qquad
\{\brc{x}{y}:x\in I,\ y\in J\},\qquad
\{x*y:x\in I,\ y\in J\},
\]
where
\[
x*y=\lambda_x(y)\cdot y^{-1}.
\]
Observe that if \(I\) is an ideal, then \([I,I]_B\subseteq I\), since \(I\) is
a subgroup of both \((G,\cdot)\) and \((G,\circ)\), and is stable under
\(\lambda\). In particular, the sequence defined below is descending.

We define recursively
\[
\partial^0(G):=G,\qquad
\partial^{n+1}(G):=[\partial^n(G),\partial^n(G)]_B
\qquad (n\ge 0).
\]

\begin{definition}
A Lie skew brace \((G,\cdot,\circ)\) is said to be \emph{solvable} if
\[
\partial^n(G)=\{e\}
\]
for some \(n\ge 0\).
\end{definition}

This is the natural Lie-group analogue of the corresponding notion for skew
braces considered in \cite[Def.~16, Def.~18]{BBEJP}.

\begin{example}\rm
Consider \(G=\mathbb R^2\) with
\[
(x,s)\cdot (y,t)=(x+y,s+t),
\qquad
(x,s)\circ (y,t)=(x+e^{s}y,\ s+t).
\]
By the \((\mathrm{ab},\mathrm{solv})\) case in the proof of
\cite[Thm.~1.4]{DameleLoi2026}, this defines a Lie skew brace structure on
\(G\): the group \((G,\cdot)\) is abelian, while \((G,\circ)\) is isomorphic to
the connected affine group of the line, hence solvable and non-nilpotent. In
particular, the two underlying Lie groups are not isomorphic. It is not
difficult to see that this Lie skew brace \((G,\cdot,\circ)\) is solvable.
\end{example}

The next proposition is the Lie-group analogue of the corresponding fact for
skew braces. Since its proof is purely algebraic, it carries over verbatim to
the present setting.

\begin{proposition}\label{prop:solvable-LSB-implies-solvable-groups}
If $(G,\cdot,\circ)$ is a solvable Lie skew brace, then both underlying Lie
groups $(G,\cdot)$ and $(G,\circ)$ are solvable.
\end{proposition}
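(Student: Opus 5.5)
The plan is to show by induction on $n$ that the $n$-th derived subgroups of both underlying groups are contained in $\partial^n(G)$:
\[
(G,\cdot)^{(n)}\subseteq \partial^n(G),
\qquad
(G,\circ)^{(n)}\subseteq \partial^n(G).
\]
Here $(G,\cdot)^{(0)}=G$ and $(G,\cdot)^{(n+1)}$ is the subgroup generated by the commutators $\brd{x}{y}$ with $x,y\in (G,\cdot)^{(n)}$; the series for $(G,\circ)$ is defined analogously with $\brc{x}{y}$. Once these inclusions are established, solvability of the brace, $\partial^n(G)=\{e\}$, immediately gives $(G,\cdot)^{(n)}=(G,\circ)^{(n)}=\{e\}$. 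Hence both $(G,\cdot)$ and $(G,\circ)$ are solvable as abstract groups, and therefore as Lie groups.

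The base case $n=0$ is trivial, since both sides equal $G$. For the inductive step, assume $(G,\cdot)^{(n)}\subseteq \partial^n(G)$. Any generator of $(G,\cdot)^{(n+1)}$ is a commutator $\brd{x}{y}$ with $x,y\in(G,\cdot)^{(n)}\subseteq\partial^n(G)$. By definition, such a commutator lies in $[\partial^n(G),\partial^n(G)]_B=\partial^{n+1}(G)$. Since $\partial^{n+1}(G)$ is in particular a subgroup of $(G,\cdot)$, it contains the whole subgroup generated by these commutators, so $(G,\cdot)^{(n+1)}\subseteq\partial^{n+1}(G)$. The argument for $\circ$ is identical: $\partial^{n+1}(G)$ is an ideal, hence a subgroup of $(G,\circ)$, and it contains every $\brc{x}{y}$ with $x,y\in\partial^n(G)$ by definition of $[\,\cdot\,,\,\cdot\,]_B$.

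The one point needing care is that the paper defines $[I,J]_B$ as a closed connected ideal, and connectedness might seem to interfere with the inclusions above. It does not: the argument only uses that $\partial^{n+1}(G)$ contains the relevant commutators and is closed under both group operations, and both facts hold by definition. In particular, we never need equality between $\partial^{n+1}(G)$ and the derived subgroups. Finally, since the abstract derived series of $(G,\cdot)$ terminates at $\{e\}$, the group is solvable in every sense, so its Lie algebra is solvable as well. No step is a real obstacle; the argument is purely algebraic, as the text preceding the proposition notes.
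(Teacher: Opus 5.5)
Your argument is correct and is the purely algebraic argument the paper has in mind. The paper writes out no proof, saying only that the skew-brace proof carries over verbatim; that argument is exactly your induction showing that $(G,\cdot)^{(n)}$ and $(G,\circ)^{(n)}$ lie in $\partial^n(G)$. Your connectedness remark can be justified explicitly: since ideals are connected, the generating set of $[I,J]_B$ is a connected set containing $e$, so the closed connected ideal it generates does contain every $[x,y]_{\cdot}$, $[x,y]_{\circ}$ and $x*y$.
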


\begin{corollary}\label{cor:compact-solvable-LSB-torus}
Let \((G,\cdot,\circ)\) be a connected compact Lie skew brace which is
solvable. Then
\[
(G,\cdot)\cong (G,\circ)\cong \mathbb T^n
\]
for some \(n\ge 0\). In particular, the Lie skew brace is trivial.
\end{corollary}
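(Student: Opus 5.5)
By Proposition~\ref{prop:solvable-LSB-implies-solvable-groups}, both underlying Lie groups \((G,\cdot)\) and \((G,\circ)\) are solvable. The plan is first to show that \((G,\cdot)\) is a torus, and then to invoke Lemma~\ref{lem:compact-abelian-trivial}, which does the rest.

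The first step is standard structure theory. \((G,\cdot)\) is a compact connected Lie group, so its Lie algebra is reductive, of the form \(\mathfrak z\oplus[\mathfrak g,\mathfrak g]\) with \([\mathfrak g,\mathfrak g]\) compact semisimple. A solvable Lie algebra has no nonzero semisimple ideal, so \([\mathfrak g,\mathfrak g]=0\). Hence \(\Lie(G,\cdot)\) is abelian, and since \(G\) is connected, \((G,\cdot)\) is abelian. A connected compact abelian Lie group is a torus, so \((G,\cdot)\cong\mathbb T^n\) for some \(n\ge 0\).

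Next, Lemma~\ref{lem:compact-abelian-trivial} applies directly. The lambda-action takes values in the discrete group \(\Aut(\mathbb T^n)\cong \GL_n(\mathbb Z)\) and \((G,\circ)\) is connected, so the action is trivial. Therefore \(a\circ b=a\cdot\lambda_a(b)=a\cdot b\) for all \(a,b\in G\). It follows that \(\circ=\cdot\), that \((G,\circ)=(G,\cdot)\cong\mathbb T^n\), and that the brace is trivial.

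There is no real obstacle here. The only point needing care is the first step, namely that a compact connected solvable Lie group is abelian, which follows from the reductive decomposition above. Solvability of \((G,\circ)\) is not even needed, since it is recovered a posteriori.
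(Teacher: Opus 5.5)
Your proof is correct and follows the paper's argument: Proposition~\ref{prop:solvable-LSB-implies-solvable-groups} gives solvability, compact connected solvable forces \((G,\cdot)\) abelian, and Lemma~\ref{lem:compact-abelian-trivial} finishes. The paper cites ``compact connected solvable implies abelian'' as a standard fact, and your reductive-decomposition argument just makes that step explicit.
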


\begin{proof}
By Proposition~\ref{prop:solvable-LSB-implies-solvable-groups}, both underlying
Lie groups \((G,\cdot)\) and \((G,\circ)\) are solvable. Since they are also
connected and compact, it is a standard fact that both are abelian. In
particular, \((G,\cdot)\) is abelian. Therefore
Lemma~\ref{lem:compact-abelian-trivial} applies. We conclude that
$(G,\cdot)\cong \mathbb{T}^n$
for some \(n\ge 0\), that the lambda-action is trivial, and hence
\[
a\circ b=a\cdot b, \ 
\text{for all }a,b\in G.
\]
Thus the Lie skew brace is trivial. In particular,
$(G,\cdot)\cong (G,\circ)\cong \mathbb{T}^n$.
\end{proof}

The converse of Proposition~\ref{prop:solvable-LSB-implies-solvable-groups} is
false, even in the connected Lie setting. Indeed, the following example
exhibits a connected noncompact \emph{simple} Lie skew brace
\((G,\cdot,\circ)\) whose additive and multiplicative Lie groups are both
solvable. In particular, solvability of the two underlying Lie groups does not
imply solvability of the Lie skew brace.

\begin{example}[A connected simple Lie skew brace with solvable underlying groups]\label{ex:A1-1-model}\rm
There exists a connected simply connected noncompact simple Lie skew brace
\((G,\cdot,\circ)\) such that both underlying Lie groups are solvable.
Let \(G=\R^3\) with additive law
\[
(X,Y,Z)\cdot(U,V,W)=(X+U,Y+V,Z+W).
\]
Define
\[
s(X,Y,Z):=X-YZ,
\]
and set
\begin{equation}\label{eq:circ-simple-solvable}
(X,Y,Z)\circ(U,V,W)=
\Big(
X+U+Ze^{s}V+Ye^{-s}W,\;
Y+e^{s}V,\;
Z+e^{-s}W
\Big).
\end{equation}

Then \((G,\circ)\) is a Lie group. Indeed, consider the global change of
coordinates
\[
\Phi:\R^3\longrightarrow \R^3,
\qquad
\Phi(X,Y,Z)=(x,y,z):=(X-YZ,\;Y,\;Z).
\]
Since \(x=s(X,Y,Z)\), a direct computation shows that, in the coordinates
\((x,y,z)\), the law \(\circ\) becomes
\begin{equation}\label{eq:circ-semidirect-form}
(x,y,z)\circ(x',y',z')
=
\bigl(
x+x',\;
y+e^{x}y',\;
z+e^{-x}z'
\bigr).
\end{equation}
Hence \((G,\circ)\) is isomorphic to the semidirect product
\[
(\R^2,+)\rtimes \R,
\]
where \(x\in \R\) acts on \((y,z)\in \R^2\) by
\[
(y,z)\longmapsto (e^x y,e^{-x}z).
\]
In particular, \((G,\circ)\) is connected, simply connected, and solvable.
Since \((G,\cdot)\cong (\R^3,+)\), the additive group is solvable as well.

The corresponding \(\lambda\)-maps are
\[
\lambda_{(X,Y,Z)}=
\begin{pmatrix}
1 & Ze^{s} & Ye^{-s}\\
0 & e^{s} & 0\\
0 & 0 & e^{-s}
\end{pmatrix},
\qquad s=X-YZ.
\]
One checks directly that
\[
\lambda_{p\circ q}=\lambda_p\lambda_q
\qquad\text{for all }p,q\in G,
\]
and therefore \((G,\cdot,\circ)\) is a Lie skew brace.

We now prove directly that \((G,\cdot,\circ)\) is simple.

Let \(I\trianglelefteq (G,\cdot,\circ)\) be an ideal. Since
\((G,\cdot)=(\R^3,+)\), every connected closed subgroup of \((G,\cdot)\) is a
real vector subspace. Thus \(I\) is a vector subspace of \(\R^3\).

Let \(e_1,e_2,e_3\) be the standard basis of \(T_0\R^3\), viewed in the
coordinates \((x,y,z)\) of \eqref{eq:circ-semidirect-form}. From that formula
one immediately obtains the Lie bracket of \((G,\circ)\):
\[
[e_1,e_2]_{\circ}=e_2,\qquad
[e_1,e_3]_{\circ}=-e_3,\qquad
[e_2,e_3]_{\circ}=0.
\]
Since \(I\) is normal in \((G,\circ)\), its tangent space at the identity,
which is again \(I\), must be an ideal of this Lie algebra.

We claim that the only proper nonzero ideals of
\(\Lie(G,\circ)\) are
\[
\R e_2,\qquad \R e_3,\qquad \R e_2\oplus \R e_3.
\]
Indeed, the derived algebra is
\[
[\Lie(G,\circ),\Lie(G,\circ)]
=
\R e_2\oplus \R e_3.
\]
Hence every proper ideal is contained in \(\R e_2\oplus \R e_3\). On this
plane, the operator \(\ad(e_1)\) is diagonal:
\[
\ad(e_1)|_{\R e_2\oplus \R e_3}
=
\begin{pmatrix}
1&0\\
0&-1
\end{pmatrix},
\]
so its only invariant subspaces are
\[
\{0\},\qquad \R e_2,\qquad \R e_3,\qquad \R e_2\oplus \R e_3.
\]
This proves the claim.

Therefore, if \(I\neq \{0\}\) and \(I\neq G\), then necessarily
\[
I=\R e_2,\qquad
I=\R e_3,
\qquad\text{or}\qquad
I=\R e_2\oplus \R e_3.
\]

We now use the \(\lambda\)-stability of ideals to exclude these three
possibilities. From the above formula for \(\lambda_{(X,Y,Z)}\), one gets
\[
\lambda_{(X,Y,Z)}(e_2)=Ze^{s}e_1+e^{s}e_2,
\qquad
\lambda_{(X,Y,Z)}(e_3)=Ye^{-s}e_1+e^{-s}e_3.
\]
Hence:
\begin{itemize}
    \item \(\R e_2\) is not \(\lambda\)-stable, since for \(Z\neq 0\),
    \[
    \lambda_{(X,Y,Z)}(e_2)\notin \R e_2;
    \]
    \item \(\R e_3\) is not \(\lambda\)-stable, since for \(Y\neq 0\),
    \[
    \lambda_{(X,Y,Z)}(e_3)\notin \R e_3;
    \]
    \item \(\R e_2\oplus \R e_3\) is not \(\lambda\)-stable, since for
    \(Z\neq 0\),
    \[
    \lambda_{(X,Y,Z)}(e_2)=Ze^{s}e_1+e^{s}e_2
    \notin \R e_2\oplus \R e_3.
    \]
\end{itemize}

Thus no proper nonzero vector subspace of \(\R^3\) can be an ideal of the Lie
skew brace. It follows that the only ideals are
\[
\{0\}\qquad\text{and}\qquad G.
\]
Therefore \((G,\cdot,\circ)\) is simple.
Finally, \((G,\cdot,\circ)\) is not solvable as a Lie skew brace. Indeed, it is
not trivial, since for example
\[
\lambda_{(1,0,0)}=
\begin{pmatrix}
1&0&0\\
0&e&0\\
0&0&e^{-1}
\end{pmatrix}
\neq \id.
\]
Hence \(\partial^1(G)\neq \{e\}\). Since \(\partial^1(G)\) is an ideal and the
brace is simple, one must have
\[
\partial^1(G)=G.
\]
Therefore \(\partial^n(G)=G\) for every \(n\ge 1\), so the Lie skew brace is
not solvable.
\end{example}



\end{document}